\title{Distributions of missing sums and differences}
\author{Tiffany C. Inglis\\
University of British Columbia \& University of Waterloo\\
NSERC USRA Report \\
August 28, 2007\\
piffany@gmail.com}
\newcommand{\Prob}[1]{P\left[{#1}\right]}
\newcommand{\floor}[1]{{\left\lfloor{#1}\right\rfloor}}
\newcommand{\ceil}[1]{{\left\lceil{#1}\right\rceil}}
\newcommand{\bigO}[1]{{\mathcal{O}\left({#1}\right)}}
\newcommand{\abs}[1]{{\left|{#1}\right|}}
\newcommand{\Rinf}{R_{\infty}}
\theoremstyle{plain}
\newtheorem{thm}{Theorem}[section]
\newtheorem{lem}[thm]{Lemma}
\theoremstyle{definition}
\newtheorem{defn}{Definition}[section]
\newtheorem{conj}{Conjecture}[section]
\theoremstyle{remark}
\begin{document}




\maketitle












\section{Introduction}

For every set of integers $R = \{x_1,\ldots,x_r\}$, there is a corresponding sumset with all pairwise sums
\begin{eqnarray}
	R + R &:=& \{ x_i+x_j : 1 \leq i, j\leq r\}
\end{eqnarray}
and a difference set with all possible pairwise differences
\begin{eqnarray}
	R - R &:=& \{ x_i-x_j : 1 \leq i, j\leq r\}.
\end{eqnarray}
Various properties of these sets are of interest to us, in particular, when $R$ is taken to be a random subset of $\{0,\ldots,n-1\}$ for some positive integer $n$. Here we introduce some terminologies to be used throughout the paper.

\vspace{3mm}
\begin{defn}
For every $n \in \mathbb{N}$, let $R_n$ be the random variable denoting a uniformly randomly chosen subset of $\{0,\ldots,n-1\}$ with $\Prob{i \in R_n} = \frac{1}{2}$ for all $i$ satisfying $0 \leq i \leq n-1$. Define $R_n'$ and $R_n''$ similarly with the extra conditions that $0 \in R_n'$ and $\{0,n-1\} \subset R_n''$. A subscript of $\infty$ indicates that the set is taken from all nonnegative integers.
\end{defn}
\vspace{3mm}

\section{Probability of $n$ as a sum given that $n-1$ is a sum}

Associated with each integer $k \geq 0$ is the probability of it being found in $\Rinf + \Rinf$. Martin and O'Bryant~\shortcite{martin} derived an explicit form for this probability:
\begin{eqnarray}
	\Prob{k \in \Rinf + \Rinf} &=&
		\begin{cases}
			1 - \left(\frac{3}{4}\right)^{(k+1)/2}, & \mbox{if $k$ is odd} \\
			1 - \frac{1}{2} \left(\frac{3}{4}\right)^{k/2}, & \mbox{if $k$ is even.} \\
		\end{cases}
\label{eqn:1}
\end{eqnarray}

In the following theorem, we extend this notion to find the probability of $k$ being a sum given that $k-1$ is a sum.

\vspace{3mm}
\begin{thm}
For every integer $k \geq 1$, the conditional probability
\begin{eqnarray}
	\Prob{k \in \Rinf + \Rinf | k-1 \in \Rinf + \Rinf} &=&
	2 + \frac{F_{k+2} + (-1)^k 3^\floor{k/2} - 2^{k+1}}
					 {2\left(2^k - 3^{\floor{k/2}}\right)},
\end{eqnarray}
where $F_{k+2}$ is the $(k+2)$-th term in the Fibonacci sequence, givenly explicitly as
\begin{eqnarray}
	F_{k+2} &=& \frac{\left(1 + \sqrt{5}\right)^{k+2} - \left(1 - \sqrt{5}\right)^{k+2}}
									 {2^{k+2} \sqrt{5}}.
\end{eqnarray}
\label{thm:1}
\end{thm}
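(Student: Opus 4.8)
The plan is to write the conditional probability as a ratio and reduce everything to the known formula~(\ref{eqn:1}). By definition,
\[
\Prob{k \in \Rinf + \Rinf \mid k-1 \in \Rinf + \Rinf} = \frac{\Prob{k \in \Rinf + \Rinf \text{ and } k-1 \in \Rinf + \Rinf}}{\Prob{k-1 \in \Rinf + \Rinf}} .
\]
The denominator is immediate: substituting $k-1$ into~(\ref{eqn:1}) and splitting on the parity of $k$ for the elementary bookkeeping of the powers of $3$, one finds $\Prob{k-1 \in \Rinf + \Rinf} = \bigl(2^{k} - 3^{\floor{k/2}}\bigr)/2^{k}$ in both cases, which is exactly the denominator appearing in the statement. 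For the numerator I would pass to complements through inclusion--exclusion,
\[
\Prob{k \in \Rinf + \Rinf \text{ and } k-1 \in \Rinf + \Rinf} = 1 - \Prob{k \notin \Rinf + \Rinf} - \Prob{k-1 \notin \Rinf + \Rinf} + q_k ,
\]
where $q_k := \Prob{k \notin \Rinf + \Rinf \text{ and } k-1 \notin \Rinf + \Rinf}$ and the two single-index terms are again read off from~(\ref{eqn:1}). Thus the entire problem collapses to computing $q_k$.

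To evaluate $q_k$, note that the membership indicators of $0, 1, \dots, k$ in $\Rinf$ are independent fair coin flips, and whether $k$ or $k-1$ lies in $\Rinf + \Rinf$ depends only on these $k+1$ bits. Writing $X_j$ for the indicator that $j \in \Rinf$, the event defining $q_k$ says exactly: no pair $a+b \in \{k-1,k\}$ with $a < b$ has $X_a = X_b = 1$, and the midpoint bit $X_{\floor{k/2}}$ (the midpoint of whichever of $k-1,k$ is even) equals $0$. I would encode this as a graph $G$ on vertex set $\{0,1,\dots,k\}$ carrying an edge $\{a,b\}$ whenever $a + b \in \{k-1,k\}$ together with a loop at $\floor{k/2}$; then $q_k$ is the number of independent sets of $G$ (those avoiding the loop vertex) divided by $2^{k+1}$.

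The structural heart of the argument is the claim that $G$ is a path. Each vertex $j$ has only the two candidate neighbours $k-j$ and $k-1-j$, so all degrees are at most $2$, and following the chain $k \sim 0 \sim (k-1) \sim 1 \sim (k-2) \sim 2 \sim \cdots$ one checks that it visits each of $0,\dots,k$ exactly once and ends at $\floor{k/2}$, the vertex bearing the loop. Hence $G$ is the path on $k+1$ vertices with a loop at one endpoint, so its admissible independent sets are precisely the independent sets of the path obtained by deleting that endpoint; since a path on $m$ vertices has $F_{m+2}$ independent sets, $q_k = F_{k+2}/2^{k+1}$. Plugging this and the~(\ref{eqn:1})-values of $\Prob{k \notin \Rinf + \Rinf}$ and $\Prob{k-1 \notin \Rinf + \Rinf}$ back into the ratio and simplifying yields the claimed closed form, with the Binet expression for $F_{k+2}$ being the standard one. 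The step I expect to be the real obstacle is this graph identification --- verifying that the two families of sum-constraints glue into a single simple path and that the unique loop sits exactly at $\floor{k/2}$; once that is settled the appearance of $F_{k+2}$ is forced and the remainder is routine algebra.
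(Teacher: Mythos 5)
Your proposal is correct, and the key computation goes by a genuinely different route than the paper's. The paper also starts from the ratio $\Prob{\{k-1,k\}\subset \Rinf+\Rinf}/\Prob{k-1\in\Rinf+\Rinf}$, but it attacks the joint probability that both $k-1$ and $k$ \emph{are} sums directly: it introduces the events $E_i^{k}$, $E_i^{k-1}$, derives a coupled recursion for $P_{2i}(k)=\Prob{E_i^{k-1}\wedge E_i^k}$ and $P_{2i+1}(k)=\Prob{E_i^{k-1}\wedge E_{i+1}^k}$, collapses it to a single sequence $T_j$ depending only on $k-i$, and solves the resulting nonhomogeneous linear recurrence, from which the Fibonacci term emerges as the solution of the homogeneous part. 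You instead pass to complements by inclusion--exclusion, so that everything reduces to $q_k=\Prob{k\notin \Rinf+\Rinf \wedge k-1\notin\Rinf+\Rinf}$, and you compute $q_k$ by a direct combinatorial identification: the constraints on the $k+1$ relevant indicator bits form a path $k\sim 0\sim k-1\sim 1\sim\cdots$ on $\{0,\dots,k\}$ with a loop at $\floor{k/2}$, whose admissible configurations are the independent sets of a path on $k$ vertices, hence $F_{k+2}$ of them and $q_k=F_{k+2}/2^{k+1}$. Your structural claims check out (every vertex has only the candidate neighbours $k-j$ and $k-1-j$; the chain uses all $k$ non-loop edges and ends at $\floor{k/2}$, which carries the unique loop since exactly one of $k$, $k-1$ is even), your value of the denominator $\left(2^k-3^{\floor{k/2}}\right)/2^k$ agrees with Equation~\ref{eqn:1} in both parities, and combining $1-\Prob{k\notin}-\Prob{k-1\notin}+F_{k+2}/2^{k+1}$ with that denominator reproduces the stated closed form (it equals the paper's intermediate $T_k$); small cases such as $k=1,2,3$ confirm this. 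What each approach buys: the paper's recursion fits into its conditioning machinery and produces the answer by routine recurrence-solving, whereas your independent-set count makes the appearance of $F_{k+2}$ conceptually transparent and avoids solving a nonhomogeneous recurrence, at the cost of the one structural verification (path plus loop at $\floor{k/2}$) that you correctly flagged as the crux.
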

\vspace{3mm}

\begin{proof}
As the actual derivation is rather tedious, I will spare the details and simply outline the proof. By definition of conditional probability,
\begin{eqnarray}
	\Prob{k \in \Rinf + \Rinf | k-1 \in \Rinf + \Rinf} &=&
	\frac{\Prob{\{k-1,k\} \subset \Rinf + \Rinf}}
			 {\Prob{k-1 \in \Rinf + \Rinf}},
\label{eqn:2}
\end{eqnarray}
for which the denominator is easily determined. As for the numerator, let us introduce some new notations. For $i$ satisfying $0 \leq i \leq \frac{k}{2}$, let $E_i^k$ be the event that
\begin{eqnarray}
	\{i,k-i\} \subset \Rinf
	\vee
	\{i+1,k-(i+1)\} \subset \Rinf
	\vee \ldots \vee
	\{\floor{\tfrac{k}{2}},k-\floor{\tfrac{k}{2}}\} \subset \Rinf,
\label{eqn:case1}
\end{eqnarray}
and for $i$ satisfying $0 \leq i \leq \frac{k-1}{2}$, let $E_i^{k-1}$ be the event that
\begin{eqnarray}
	\{i,(k-1)-i\} \subset \Rinf
	\vee
	\{i+1,(k-1)-(i+1)\} \subset \Rinf
	\vee \ldots \vee
	\{\floor{\tfrac{k-1}{2}},(n-1)-\floor{\tfrac{k-1}{2}}\} \subset \Rinf.
\end{eqnarray}
Hence the numerator in Equation~\ref{eqn:2} may be re-expressed as
$\Prob{E_0^{k-1} \wedge E_0^k}$. It turns out that, with proper conditioning of terms, we can write down a recursive formula for probabilities of this form:
\begin{eqnarray}
	\Prob{E_i^{k-1} \wedge E_i^k} &=&
		\tfrac{1}{2} \Prob{E_{i+1}^{k-1} \wedge E_{i+1}^k} +
		\tfrac{1}{8} \Prob{E_{i+1}^{k-1} \wedge E_{i+2}^k} + \ldots \\
	\Prob{E_i^{k-1} \wedge E_{i+1}^k} &=&
		\tfrac{1}{2} \Prob{E_{i+1}^{k-1} \wedge E_{i+1}^k} +
		\tfrac{1}{4} \Prob{E_{i+1}^{k-1} \wedge E_{i+2}^k} + \ldots.
\end{eqnarray}
Now if we denote
$\Prob{E_i^{k-1} \wedge E_i^k}$
and 
$\Prob{E_i^{k-1} \wedge E_{i+1}^k}$
respectively by $P_{2i}(k)$ and $P_{2i+1}(k)$,
we have the recursive sequence
$\{P_i(k)\}_{0 \leq i \leq k-1}$
with $P_{k-1}(k) = P_{k-2}(k) = \tfrac{1}{4}$ and
\begin{eqnarray}
	P_i(k) &=& \tfrac{1}{2} P_{i+1}(k) +
						 \tfrac{1}{4} P_{i+2}(k) -
						 \tfrac{1}{24} \left(3+(-1)^{k-i}\right) \left(\tfrac{3}{4}\right)^\floor{(k-i)/2} +
						 \tfrac{1}{4}
						 \qquad \mbox{for $i=0,1,\ldots,k-3$}.
\end{eqnarray}
Noticing that the recursion depends on $k-i$ rather than on $k$ and $i$ independently prompts us to introduce yet another sequence
$T_j := P_{k-j}(k) = P_{k-j-i}(k-1) = \ldots = P_0(j)$
with $T_1 = T_2 = \frac{1}{4}$. For $k \geq 3$, the recursive relation works out to be
\begin{eqnarray}
	T_k &=& 1 + \frac{F_{k+2}}{2^{k+1}}
						- \frac{3^\floor{k/2}\left(4-(-1)^k\right)}
									 {2^{k+1}},
\end{eqnarray}
where $\{F_i\}$ is the Fibonacci sequence. From this formula yields the desired probability since
\begin{eqnarray}
	\Prob{k \in \Rinf + \Rinf | k-1 \in \Rinf + \Rinf}
		&=&	\frac{T_0}{\Prob{k-1 \in \Rinf + \Rinf}} \\
		&=& \frac{T_0}{1 - \tfrac{1}{4}\left(3 + (-1)^k\right)
												\left(\frac{3}{4}\right)^\floor{k/2}}.
\end{eqnarray}
\end{proof}
\section{Relating several distributions for number of missing sums}

Knowing the probability
$\Prob{k \in \Rinf + \Rinf | k-1 \in \Rinf + \Rinf}$
is only the tip of the iceberg. What we are really aiming to find out is, for any $n \geq 0$, how $R_n + R_n$ varies in size. Certainly, $\#\{R_n + R_n\}$ varies depending on $n$, so instead we turn our focus to the number of sums it misses in the range of all possible sums, $\{0,\ldots,2n-2\}$. For convenience, we will incorporate a few new notations. For any integer set $R$ and real interval $[a,b]$, define $f_{[a,b]}^+(R)$ as the number of sums $R+R$ misses in the interval $[a,b]$. Similarly, let $f_{[a,b]}^-(R)$ be the number of differences $R-R$ misses in $[a,b]$. More precisely,
\begin{eqnarray}
	f_{[a,b]}^{+}(R) &:=&
		\#\left\{ k \in \mathbb{Z} \cap [a,b] : k \notin R + R \right\},\\
	f_{[a,b]}^{-}(R) &:=&
		\#\left\{ k \in \mathbb{Z} \cap [a,b] : k \notin R - R \right\}.
\end{eqnarray}
For the three \emph{special} sets $R_n$, $R_n'$ and $R_n''$ defined earlier, we are primarily concerned with the sums they miss in the intervals $[0,n-1]$ and $[0,2n-2]$, along with the differences missed in $[0,n-1]$ and $[-(n-1),n-1]$. Note that the differences missed in $[-(n-1),n-1]$ are exactly those missed in $[0,n-1]$, plus their negative counterparts.

\begin{figure}[thb!]
  \centering
  \subfloat[]{\label{fig:1a}
  	\includegraphics[width=0.45\textwidth]
  	{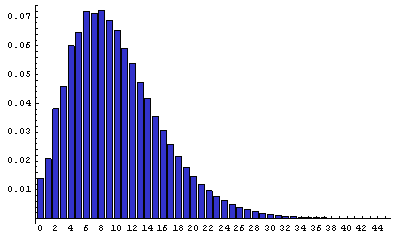}}\\
  \subfloat[]{\label{fig:1b}
  	\includegraphics[width=0.45\textwidth]
  	{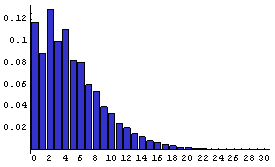}}
  \subfloat[]{\label{fig:1c}
  	\includegraphics[width=0.45\textwidth]
  	{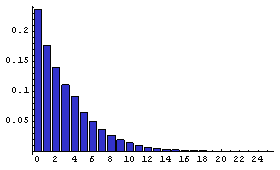}}
 	\caption{The distributions for the number of missing sums for (a) $R_n$ in $[0,2n-2]$, (b) $R_n$ in $[0,n-1]$, and (c) $R_n'$ in $[0,n-1]$.}
  	\label{fig:1}
\end{figure}

Martin and O'Bryant~\shortcite{martin} studied several of these probabilities, including
$\Prob{f_{[0,2n-2]}^{+}(R_n) = m}$,
the probability of $R_n + R_n$ missing exactly $m$ sums in $[0,2n-2]$. From randomly generated data, they speculated that underneath this distribution lies a more fundamental distribution involving 
$\Prob{f_{[0,n-1]}^{+}(R_n) = m}$,
which in turn is built upon the distribution for
$\Prob{f_{[0,n-1]}^{+}(R_n') = m}$ (Figure~\ref{fig:1}). The following theorem states the exact relationship between these distributions in the limiting case as $n$ approaches infinity:

\vspace{3mm}
\begin{thm}
Let $m$ be a nonnegative integer. Then we have
\begin{enumerate}[(a)]
	\item $\lim\limits_{n \to \infty} \Prob{f_{[0,2n-2]}^{+}(R_n) = m} =
				 \sum\limits_{i=0}^m{
				 		\lim\limits_{n \to \infty} \Prob{f_{[0,n-1]}^{+}(R_n) = i}
				 		\lim\limits_{n \to \infty} \Prob{f_{[0,n-1]}^{+}(R_n) = m-i}
				 		}$,
	\item $\lim\limits_{n \to \infty} \Prob{f_{[0,n-1]}^{+}(R_n) = m} =
				 \sum\limits_{i=0}^\floor{m/2}{
				 		\frac{1}{2^{i+1}}
				 		\lim\limits_{n \to \infty} \Prob{f_{[0,n-1]}^{+}(R_n') = m-2i}
				 		}$.
\end{enumerate}
\label{thm:2}
\end{thm}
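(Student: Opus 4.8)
The plan is to handle both parts by conditioning on the local structure of $R_n$ near the endpoints of the relevant interval, exploiting the fact that $R_n+R_n$ almost never misses a sum far from those endpoints. First I would record a concentration estimate: for $0\le k\le n-1$ the pairs $\{0,k\},\{1,k-1\},\dots$ partition $\{0,\dots,k\}$, and $k\in R_n+R_n$ holds iff one of these pairs lies in $R_n$; since these are independent events of probability at least $\tfrac14$,
\begin{eqnarray}
	\Prob{k\notin R_n+R_n} &\le& \left(\tfrac{3}{4}\right)^{\floor{k/2}},
\end{eqnarray}
so $\Prob{\exists\,k\in[L,n-1]:k\notin R_n+R_n}\le\sum_{k\ge L}(3/4)^{\floor{k/2}}=:\varepsilon(L)$, which tends to $0$ as $L\to\infty$ uniformly in $n$. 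Hence, off an event of probability at most $\varepsilon(L)$, we have $f_{[0,n-1]}^{+}(R_n)=f_{[0,L]}^{+}(R_n)$, a quantity depending only on $R_n\cap\{0,\dots,L\}$, and the same holds for $R_n'$ and $R_n''$. Since the law of $R_n\cap\{0,\dots,L\}$ stabilizes once $n>L$, this also shows the limits appearing in the theorem exist and that replacing an interval endpoint by one a bounded distance away leaves the limiting distribution unchanged.

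For part (b), I would condition on $a:=\min R_n$ (well defined except on the event $R_n=\emptyset$ of probability $2^{-n}$), for which $\Prob{a=i}=2^{-(i+1)}$. Writing $R_n-a$ as $\{0\}$ together with a uniformly random subset of $\{1,\dots,n-1-a\}$ — which is exactly the law of $R'_{n-a}$ — and using $R_n+R_n=2a+\bigl((R_n-a)+(R_n-a)\bigr)$, the sums $0,\dots,2a-1$ are all missed while $f_{[2a,n-1]}^{+}(R_n)$ has, conditionally, the law of $f_{[0,n-1-2a]}^{+}(R'_{n-a})$. Thus, conditionally on $\min R_n=i$, $f_{[0,n-1]}^{+}(R_n)=2i+f_{[0,n-1-2i]}^{+}(R'_{n-i})$. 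Since $f_{[0,n-1]}^{+}(R_n)=m$ forces $i\le\floor{m/2}$, only finitely many values of $i$ contribute, so I can pass to the limit $n\to\infty$ term by term; combined with the endpoint-shift remark (comparing $[0,n-1-2i]$ with the natural interval for $R'_{n-i}$) this yields exactly the formula in (b).

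For part (a), I would split $[0,2n-2]=[0,n-1]\sqcup[n,2n-2]$, so that $f_{[0,2n-2]}^{+}(R_n)=f_{[0,n-1]}^{+}(R_n)+f_{[n,2n-2]}^{+}(R_n)$. The reflection $x\mapsto n-1-x$ preserves the law of $R_n$ and sends $k\in R_n+R_n$ to $2n-2-k$ in the reflected sumset, so $f_{[n,2n-2]}^{+}(R_n)$ has the same law as $f_{[0,n-2]}^{+}(R_n)$, whose limiting distribution equals that of $f_{[0,n-1]}^{+}(R_n)$ by the endpoint remark. By the concentration estimate, with probability tending to $1$ the first summand is a function of $R_n\cap\{0,\dots,L(n)\}$ and the reflected second summand a function of $R_n\cap\{n-1-L(n),\dots,n-1\}$; choosing $L(n)\to\infty$ slowly enough that $2L(n)<n-1$ makes these coordinate blocks disjoint, hence the two summands asymptotically independent. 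The limiting law of their sum is therefore the convolution of two copies of $\lim_n\Prob{f_{[0,n-1]}^{+}(R_n)=\,\cdot\,}$, which is (a).

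I expect the main obstacle to be the $\varepsilon$-bookkeeping that turns ``asymptotic independence'' and ``bounded endpoint shifts are harmless'' into rigorous statements: one must let $L\to\infty$ together with $n\to\infty$ while keeping the truncation error $\varepsilon(L)$ and the discrepancies from shifted endpoints under uniform control, and simultaneously verify that all the limits in the statement exist (which the truncation argument does supply, since the truncated distributions stabilize). The algebraic content — the reflection identity, the $\min R_n$ decomposition, and bookkeeping which sums are forced to be missing — is routine once these analytic points are in place.
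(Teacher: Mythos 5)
Your proposal is correct and follows essentially the same route as the paper: the same bound $\Prob{k\notin R_n+R_n}\le(3/4)^{\floor{k/2}}$ to show missing sums concentrate near the interval ends (the paper's Lemma~\ref{lem:3}, with a fixed cutoff $\floor{n/8}$ where you use a growing $L$), localization to disjoint coordinate blocks giving asymptotic independence for part (a), and conditioning on $\min R_n$ with $\Prob{\min R_n=i}=2^{-(i+1)}$ and the shift identity $f^+_{[0,n-1]}(R_n)=2i+f^+_{[0,n-2i-1]}(R'_{n-i})$ for part (b). Your explicit reflection argument and attention to existence of the limits only make precise what the paper handles by ``symmetry of $f_S$ and $f_L$'' and leaves implicit.
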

\vspace{3mm}

The proof involves counting the missing sums separately, for example, splitting them into small sums (ranging from $0$ to $n-1$) and large sums (ranging from $n$ to $2n-2$). For written simplicity, we employ the following subscripts to denote the intervals involved (the possible parameters $R_n$ and $R_n'$ are omitted here):
$$
\begin{array}{lll}
		(a) f_{all}^+ := f_{[0,2n-2]}
	& (b) f_{S}^+   := f_{[0,n-1]}
	& (c) f_{L}^+   := f_{[n,2n-2]}   \\\\
  	(d) f_{XS}^+  := f_{[0,\floor{n/8}-1]}
  & (e) f_{MS}^+  := f_{[\floor{n/8},n-1]}
  & (f) f_{ML}^+  := f_{[n,(2n-2)-\floor{n/8}]}    \\\\
  	(g) f_{XL}^+  := f_{[(2n-2)-\floor{n/8}+1, 2n-2]}
  & (h) f_{M}^+   := f_{[\floor{n/8},(2n-2)-\floor{n/8}]}
\end{array}
$$

Here are a few lemmas to facilitate the proof of the theorem:

\vspace{5mm}
\begin{lem}
\begin{eqnarray}
	\lim\limits_{n \to \infty} \Prob{f_M^+(R_n) \geq 1} = 0.
\end{eqnarray}
\label{lem:3}
\end{lem}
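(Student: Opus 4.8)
The plan is to prove this by a straightforward first‑moment (union‑bound) argument; no appeal to Theorem~\ref{thm:1} or to the finer distributional structure will be needed. Observe that $f_M^+(R_n)\geq 1$ holds precisely when some integer $k$ in the middle interval $[\floor{n/8},(2n-2)-\floor{n/8}]$ fails to lie in $R_n+R_n$. There are at most $2n-1$ such integers, so it suffices to bound, for each fixed $k$ in this range, the probability that $k\notin R_n+R_n$ by a quantity that decays exponentially in $n$, and then sum.

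Fix an integer $k$ with $\floor{n/8}\leq k\leq(2n-2)-\floor{n/8}$, and let $N_k$ be the number of integers $i$ with $0\leq i<k-i\leq n-1$. The admissible $i$ form an interval determined by $i\geq\max(0,k-(n-1))$ and $i\leq\ceil{k/2}-1$, and a short computation with these constraints — splitting on the parity of $k$ and on whether $k\leq n-1$ — shows that $N_k\geq\floor{n/8}/2$; this is exactly what cutting the interval off a distance $\floor{n/8}$ from each endpoint of $[0,2n-2]$ buys us, uniformly in $k$. For a fixed $k$, the unordered pairs $\{i,k-i\}$ coming from distinct admissible $i$ are pairwise disjoint subsets of $\{0,\ldots,n-1\}$, so the events $A_i:=\{\{i,k-i\}\subset R_n\}$ are mutually independent, each of probability $\tfrac14$. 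Since $k\notin R_n+R_n$ forces $\overline{A_i}$ for every such $i$, we get
\begin{eqnarray}
	\Prob{k\notin R_n+R_n} &\leq& \prod_{i}\Prob{\overline{A_i}}
	\;=\; \left(\tfrac34\right)^{N_k}
	\;\leq\; \left(\tfrac34\right)^{\floor{n/8}/2}.
\end{eqnarray}

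Summing this bound over the at most $2n-1$ integers $k$ in the middle interval gives
\begin{eqnarray}
	\Prob{f_M^+(R_n)\geq 1}
	&\leq& \sum_{k=\floor{n/8}}^{(2n-2)-\floor{n/8}}\Prob{k\notin R_n+R_n}
	\;\leq\; (2n-1)\left(\tfrac34\right)^{\floor{n/8}/2},
\end{eqnarray}
and the right‑hand side tends to $0$ as $n\to\infty$, which proves the lemma. The only place where any real care will be needed is the lower bound $N_k\geq\floor{n/8}/2$ — confirming that the number of disjoint representing pairs is a fixed positive multiple of $n$ for \emph{every} $k$ in the middle range — and this is precisely why the interval $[\floor{n/8},(2n-2)-\floor{n/8}]$, rather than all of $[0,2n-2]$, appears in the statement; everything else is routine.
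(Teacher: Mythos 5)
Your proof is correct, and its skeleton is the same as the paper's: a union bound over the roughly $2n$ integers in the middle interval, a per-$k$ bound on $\Prob{k \notin R_n + R_n}$ that is exponentially small uniformly in $k$, and the observation that a linear number of terms times an exponentially decaying bound tends to zero. Where you diverge is in how the per-$k$ bound is obtained. The paper quotes the Martin--O'Bryant formula for $\Rinf + \Rinf$ (its Equation~\ref{eqn:1}) and asserts $\Prob{k \notin R_n + R_n} \leq (3/4)^{\floor{k/2}}$, then bounds $\floor{k/2}$ below by $\floor{\floor{n/8}/2}$; you instead count, for each $k$ in the middle range, at least $\floor{n/8}/2$ pairwise disjoint pairs $\{i,k-i\} \subset \{0,\ldots,n-1\}$ and use independence of the corresponding events to get $\Prob{k \notin R_n + R_n} \leq (3/4)^{\floor{n/8}/2}$ directly. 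Your route is self-contained (no appeal to Equation~\ref{eqn:1}) and in fact more careful: the inequality the paper cites is only literally valid for $k \leq n-1$, since for $k$ in the upper half of $[0,2n-2]$ the number of admissible representations is governed by $2n-2-k$ rather than $k$ (e.g.\ $\Prob{2n-2 \notin R_n+R_n} = 1/2$, not $(3/4)^{n-1}$), so strictly one should argue by symmetry on the upper half of the middle interval --- which is exactly what your uniform lower bound $N_k \geq \floor{n/8}/2$, obtained by splitting on whether $k \leq n-1$, takes care of. Both arguments deliver the same conclusion; yours costs a short counting step but closes that small imprecision in the paper's per-$k$ estimate.
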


\begin{proof}
	From Equation~\ref{eqn:1}, we can derive the inequality $\Prob{k \notin R_n + R_n} \leq \left(\tfrac{3}{4}\right)^\floor{k/2}$, so
	
\begin{eqnarray}
		\lim\limits_{n \to \infty}{\Prob{f_M^+(R_n) \geq 1}}
			&\leq&
				  \lim\limits_{n \to \infty} {
					\sum\limits_{k=\floor{n/8}}^{(2n-2)-\floor{n/8}}{
						\Prob{k \notin R_n + R_n}
					}
			} \\
			&\leq&
				  \lim\limits_{n \to \infty} {
					\sum\limits_{k=\floor{n/8}}^{(2n-2)-\floor{n/8}}{
						\left(\tfrac{3}{4}\right)^\floor{k/2}
					}
			} \\
			&\leq&
				  \lim\limits_{n \to \infty} {
					\sum\limits_{k=\floor{n/8}}^{(2n-2)-\floor{n/8}}{
						\left(\tfrac{3}{4}\right)^\floor{\floor{n/8}/2}
					}
			} \\
			&\leq&
				  \lim\limits_{n \to \infty} {
					\left(2n -2\floor{n/8}-1 \right)
					\left(\tfrac{3}{4}\right)^\floor{\floor{n/8}/2}
			} \\
			&=& 0,
\end{eqnarray}
since the exponential decay of
$\left(\tfrac{3}{4}\right)^\floor{\floor{n/8}/2}$
exceeds the linear growth of $2n - 2\floor{\frac{n}{8}}-1$.
A probability is always nonnegative, hence
$\lim\limits_{n \to \infty}{\Prob{f_M^+(R_n) \geq 1}}$
must equal zero.
\end{proof}

\vspace{5mm}
\begin{lem}
	For any integer $m \geq 0$, the limit of the probability
	\begin{eqnarray}
		\lim\limits_{n \to \infty} {
			\sum\limits_{
					a,c \geq 0, b \geq 1, a+b+c = m
					} {
				\Prob{
					f_{XS}^{+}(R_n) = a
					\wedge
					f_{M}^{+}(R_n) = b
					\wedge
					f_{XL}^{+}(R_n) = c
				}
			}
		}
		&=& 0.
	\end{eqnarray}
\label{lem:4}
\end{lem}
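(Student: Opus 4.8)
The plan is to reduce this statement directly to Lemma~\ref{lem:3}, with essentially no extra work. The crucial observation is that every event appearing in the sum forces $f_M^+(R_n) \geq 1$, since the summation index is restricted to $b \geq 1$.

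First I would note that, for a fixed value of $m$, the index set $\{(a,b,c) : a,c \geq 0,\ b \geq 1,\ a+b+c = m\}$ is finite, and the events $f_{XS}^+(R_n) = a \wedge f_M^+(R_n) = b \wedge f_{XL}^+(R_n) = c$ are pairwise disjoint as $(a,b,c)$ ranges over this set, because they prescribe different values of the same triple of random variables. Hence the sum of their probabilities equals the probability of their union, and that union is contained in the event $\{f_M^+(R_n) \geq 1\}$ (indeed in $\{1 \leq f_M^+(R_n) \leq m\}$). By monotonicity of probability,
\[
	\sum_{a,c \geq 0,\ b \geq 1,\ a+b+c=m}
		\Prob{f_{XS}^{+}(R_n) = a \wedge f_{M}^{+}(R_n) = b \wedge f_{XL}^{+}(R_n) = c}
	\ \leq\ \Prob{f_M^+(R_n) \geq 1}.
\]

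Next I would let $n \to \infty$ on both sides. The right-hand side tends to $0$ by Lemma~\ref{lem:3}, so the left-hand side, being a nonnegative quantity bounded above by it, is squeezed to $0$ as well; this is exactly the claim.

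As for obstacles: there is essentially no analytic difficulty here, as all of the content sits in Lemma~\ref{lem:3}. The only point requiring a moment's care is that we must bound the sum by $\Prob{f_M^+(R_n) \geq 1}$ \emph{before} passing to any limit; since $m$ is held fixed and the sum over $(a,b,c)$ is then finite, no interchange of a limit with an infinite sum is involved, and the argument goes through cleanly.
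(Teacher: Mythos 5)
Your proposal is correct and follows essentially the same route as the paper: bound the sum by $\Prob{f_M^+(R_n) \geq 1}$ (the paper does this via an intermediate sum over $b \geq 1$ of $\Prob{f_M^+(R_n) = b}$, which is the same disjointness observation you make explicit) and then invoke Lemma~\ref{lem:3}. Your added remarks on finiteness of the index set and taking the bound before passing to the limit are fine but not a different argument.
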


\begin{proof}
Using Lemma~\ref{lem:3}, we have
	\begin{eqnarray}
		\lim\limits_{n \to \infty} {
			\sum\limits_{
					a,c \geq 0, b \geq 1, a+b+c = m
					} {
				\Prob{
					f_{XS}^{+}(R_n) = a
					\wedge
					f_{M}^{+}(R_n) = b
					\wedge
					f_{XL}^{+}(R_n) = c
				}
			}
		}
		&\leq&
		\lim\limits_{n \to \infty}{
			\sum\limits_{b \geq 1} {
			\Prob{f_M^+(R_n) = b}
			}} \\
		&\leq&
		\lim\limits_{n \to \infty}{
			\Prob{f_M^+(R_n) \geq 1}
			} \\
		&=& 0.
	\end{eqnarray}
\end{proof}

\vspace{5mm}
\begin{lem}
For any integer $m$ satisfying $m \geq 0$, the probability
\begin{eqnarray}
	\Prob{f_{XS}^+(R_n) = i \wedge f_{XL}^{+}(R_n) = m-i}
	&=&
	\Prob{f_{XS}^+(R_n) = i}\Prob{f_{XL}^{+}(R_n) = m-i}.
\end{eqnarray}
\label{lem:5}
\end{lem}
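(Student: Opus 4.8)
The plan is to show that the two events depend on disjoint blocks of the independent coin flips that define $R_n$, so that their independence is immediate. Recall that $R_n$ is formed by including each of $0,1,\ldots,n-1$ independently with probability $\tfrac12$; write $X_j := \mathbf{1}[j \in R_n]$, so that $X_0,\ldots,X_{n-1}$ are mutually independent.

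First I would identify which coin flips $f_{XS}^+(R_n)$ can possibly depend on. A value $k \in [0,\floor{n/8}-1]$ lies in $R_n + R_n$ if and only if $k = i+j$ for some $i,j \in R_n$; since $0 \le i,j$ forces $i,j \le k \le \floor{n/8}-1$, whether $k$ is a sum is determined entirely by $\{X_j : 0 \le j \le \floor{n/8}-1\}$. Hence $f_{XS}^+(R_n)$ is a measurable function of the block of indicators indexed by $A := \{0,1,\ldots,\floor{n/8}-1\}$.

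Next I would do the symmetric computation for $f_{XL}^+(R_n)$. A value $k \in [(2n-2)-\floor{n/8}+1,\,2n-2]$ lies in $R_n+R_n$ if and only if $k = i+j$ with $i,j \in R_n$ and $i,j \le n-1$; then $i = k-j \ge k-(n-1) \ge (n-1)-\floor{n/8}+1$, and likewise for $j$, so both summands lie in $B := \{(n-1)-\floor{n/8}+1,\ldots,n-1\}$. Therefore $f_{XL}^+(R_n)$ is a function of the block of indicators indexed by $B$. Finally I would verify $A \cap B = \varnothing$, which reduces to $\floor{n/8}-1 < (n-1)-\floor{n/8}+1$, i.e. $2\floor{n/8} < n+1$; this holds for every $n$ because $2\floor{n/8} \le n/4 < n+1$. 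Since the $X_j$ are mutually independent and $A,B$ are disjoint, every event in the $\sigma$-algebra generated by $\{X_j : j \in A\}$ is independent of every event in the $\sigma$-algebra generated by $\{X_j : j \in B\}$; applying this to $\{f_{XS}^+(R_n)=i\}$ and $\{f_{XL}^+(R_n)=m-i\}$ yields the stated factorization.

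The only step that needs genuine care — the \emph{main obstacle}, though a mild one — is the bookkeeping in the second paragraph: one must confirm that no extra-large sum can secretly be realized using a small element of $R_n$. This is exactly the inequality $k-(n-1) \ge (n-1)-\floor{n/8}+1$ on the extra-large range, and it is precisely the condition that keeps the index block $B$ away from $A$.
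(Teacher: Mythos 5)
Your proposal is correct and follows essentially the same route as the paper: both arguments observe that extra-small sums are determined only by the elements of $R_n$ in $\left[0,\floor{n/8}-1\right]$ and extra-large sums only by the elements near $n-1$, so the two counts are functions of disjoint (hence independent) blocks of the membership indicators. Your version is in fact slightly more careful with the index bookkeeping (the paper's description of the large block contains a typo, listing indices up to $2n-2$ rather than $n-1$), but the underlying idea is identical.
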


\begin{proof}
To prove the lemma, we need to show that $f_{XS}^+(R_n) = i$ and $f_{XL}^{+}(R_n) = m-i$ are independent events. By definition,
\begin{eqnarray}
	f_{XS}^+(R_n) &:=& 
		f_{[0,\floor{n/8}-1]}(R_n) \\
		&=& \frac{\floor{\frac{n}{8}} - 
							\#\left\{\{0,\ldots,\floor{\frac{n}{8}}-1\} \cap (R_n + R_n) \right\}}
						 {\floor{\frac{n}{8}}} \\
		&=& \frac{\floor{\frac{n}{8}} - 
							\#\left\{
								\left(\{0,\ldots,\floor{\frac{n}{8}}-1\} \cap R_n \right) +
								\left(\{0,\ldots,\floor{\frac{n}{8}}-1\} \cap R_n \right)
								\right\}}
						 {\floor{\frac{n}{8}}},
\end{eqnarray}
where the last step uses the fact that any sum of $R_n$ in the range $\left[0,\floor{\frac{n}{8}}-1\right]$ can only be the sum of two elements in $R_n$ in that range. Similarly,
\begin{eqnarray}
	f_{XL}^+(R_n) &=&
		\frac{
					 \floor{\frac{n}{8}} -
					 \#\left\{
					 			\left(
					 				\left\{2n-\floor{\frac{n}{8}}-1,\ldots,2n-2\right\}
					 				\cap
					 				R_n
					 			\right) +
					 			\left(
					 				\left\{2n-\floor{\frac{n}{8}}-1,\ldots,2n-2\right\}
					 				\cap
					 				R_n
					 			\right)
					 	 \right\}
				 }
				 {\floor{\frac{n}{8}}},
\end{eqnarray}
since large sums only result from adding two large elements together.

Notice that $f_{XS}^+(R_n)$ is a function of $n$ and
$\left\{0,\ldots,\floor{\frac{n}{8}}-1\right\} \cap R_n$
while $f_{XL}^+(R_n)$
is a function of $n$ and
$\left\{2n-\floor{\frac{n}{8}}-1,\ldots,2n-2\right\} \cap R_n$.
Since
$\left\{0,\ldots,\floor{\frac{n}{8}}-1\right\} \cap R_n$
and
$\left\{2n-\floor{\frac{n}{8}}-1,\ldots,2n-2\right\} \cap R_n$
as disjoint, for any fixed $n$, the values of
$f_{XS}^+(R_n)$ and$ f_{XL}^+(R_n)$
are independent.
\end{proof}

\vspace{5mm}
\begin{lem}
For any integer $m$ satisfying $m \geq 0$, the limit of the probability
\begin{eqnarray}
	\lim\limits_{n \to \infty} {
		\sum\limits_{i=0}^{m} {
			\Prob{f_{XS}^+(R_n) = i}
			\Prob{f_{XL}^+(R_n) = m-i}
		}
	}
	&=&
	\lim\limits_{n \to \infty} {
		\sum\limits_{i=0}^{m} {
			\Prob{f_{S}^+(R_n) = i}
			\Prob{f_{L}^+(R_n) = m-i}
		}
	}.
\end{eqnarray}
\label{lem:6}
\end{lem}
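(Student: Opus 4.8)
The plan is to use the fact that, for each fixed $n$, the random variables $f_S^+(R_n)$ and $f_{XS}^+(R_n)$ are functions of the same set $R_n$ and differ only through the sums missed in the middle small band, and similarly for $f_L^+(R_n)$ versus $f_{XL}^+(R_n)$. Indeed, because a sum missed on a subinterval is also missed on any larger interval, and the number of missed sums is additive over a partition of the interval into integer blocks, we have $f_S^+(R_n) = f_{XS}^+(R_n) + f_{MS}^+(R_n)$ and $f_L^+(R_n) = f_{XL}^+(R_n) + f_{ML}^+(R_n)$, with $f_{MS}^+(R_n) + f_{ML}^+(R_n) = f_M^+(R_n)$. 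Consequently the event $\{f_S^+(R_n) \ne f_{XS}^+(R_n)\} \cup \{f_L^+(R_n) \ne f_{XL}^+(R_n)\}$ lies inside $\{f_M^+(R_n) \ge 1\}$, whose probability tends to $0$ by Lemma~\ref{lem:3}.

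First I would record the elementary bounds $\bigl|\Prob{f_S^+(R_n) = i} - \Prob{f_{XS}^+(R_n) = i}\bigr| \le \Prob{f_S^+(R_n) \ne f_{XS}^+(R_n)} \le \Prob{f_M^+(R_n) \ge 1}$ and, analogously, $\bigl|\Prob{f_L^+(R_n) = m-i} - \Prob{f_{XL}^+(R_n) = m-i}\bigr| \le \Prob{f_M^+(R_n) \ge 1}$. Then, setting $a = \Prob{f_S^+(R_n)=i}$, $c = \Prob{f_{XS}^+(R_n)=i}$, $b = \Prob{f_L^+(R_n)=m-i}$, $d = \Prob{f_{XL}^+(R_n)=m-i}$, all in $[0,1]$, the estimate $|ab - cd| \le |a-c|\,b + c\,|b-d| \le |a-c| + |b-d|$ bounds each of the $m+1$ summands of the difference between the two sums by $2\,\Prob{f_M^+(R_n) \ge 1}$. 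Summing and letting $n \to \infty$ (with $m$ fixed) shows that the difference of the two sums is at most $2(m+1)\,\Prob{f_M^+(R_n) \ge 1}$, which tends to $0$ by Lemma~\ref{lem:3} — precisely the asserted equality of limits.

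I do not expect a real obstacle here; the only step that requires care is the combinatorial bookkeeping that identifies $f_S^+ - f_{XS}^+$ with $f_{MS}^+$ and $f_L^+ - f_{XL}^+$ with $f_{ML}^+$, and hence shows the total discrepancy is controlled by $f_M^+$, so that Lemma~\ref{lem:3} applies. A minor formal point is that the statement equates two limits: the computation above really shows the two $n$-indexed sequences are asymptotically equal, so one converges if and only if the other does and to the same value, the existence of the common limit following from the surrounding analysis of the distributions of $f_S^+(R_n)$ and $f_L^+(R_n)$.
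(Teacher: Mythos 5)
Your proposal is correct and follows essentially the same route as the paper: both identify that $f_S^+$ and $f_{XS}^+$ (and likewise $f_L^+$ and $f_{XL}^+$) can differ only on the event $\{f_M^+(R_n)\geq 1\}$, whose probability vanishes by Lemma~\ref{lem:3}, so the discrepancy is $\bigO{\Prob{f_M^+(R_n)\geq 1}}$. The only cosmetic difference is that the paper proves the two marginal limit equalities separately via conditioning on $f_{MS}^+=0$ versus $f_{MS}^+\geq 1$, whereas you bound the difference of the full sums directly by $2(m+1)\Prob{f_M^+(R_n)\geq 1}$, which is a slightly more explicit version of the same argument.
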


\begin{proof}
To prove the statement, we will show both
\begin{eqnarray}
	\lim\limits_{n \to \infty} {
		\Prob{f_{XS}^+(R_n) = i}
	}
	&=&
	\lim\limits_{n \to \infty} {
		\Prob{f_{S}^+(R_n) = i}
	} \\
	\lim\limits_{n \to \infty} {
		\Prob{f_{XL}^+(R_n) = m-i}
	}
	&=&
	\lim\limits_{n \to \infty} {
		\Prob{f_{L}^+(R_n) = m-i}
	}.
\end{eqnarray}
Re-express the probability
\begin{eqnarray}	
\Prob{f_{S}^+(R_n) = i}
		&=& \Prob{f_{XS}^+(R_n) = i \wedge f_{MS}^+(R_n) = 0} +
				\Prob{f_{XS}^+(R_n) = i - f_M^+(R_n) \wedge f_{MS}^+(R_n) \geq 1} \\
		&=& \Prob{f_{XS}^+(R_n) = i \wedge f_{MS}^+(R_n) = 0} +
				\bigO{\Prob{f_{MS}^+(R_n) \geq 1}} \\
		&=& \Prob{f_{XS}^+(R_n) = i} -
				\Prob{f_{XS}^+(R_n) = i \wedge f_{MS}^+(R_n) \geq 1} +
				\bigO{\Prob{f_{MS}^+(R_n) \geq 1}} \\
		&=& \Prob{f_{XS}^+(R_n) = i} +
				\bigO{\Prob{f_{MS}^+(R_n) \geq 1}} +
				\bigO{\Prob{f_{MS}^+(R_n) \geq 1}} \\
		&=& \Prob{f_{XS}^+(R_n) = i} +
				\bigO{\Prob{f_{MS}^+(R_n) \geq 1}} \\
		&=& \Prob{f_{XS}^+(R_n) = i} +
				\bigO{\Prob{f_{M}^+(R_n) \geq 1}}.
\end{eqnarray}

Since $\lim\limits_{n \to \infty}{\Prob{f_M^+(R_n)} \geq 1} = 0$ by Lemma~\ref{lem:3}, we have
\begin{eqnarray}
\lim\limits_{n \to \infty}{\Prob{f_{S}^+(R_n) = i}} 
&=&
\lim\limits_{n \to \infty}{\Prob{f_{XS}^+(R_n) = i}}.
\end{eqnarray}
Analogously, we can show that
\begin{eqnarray}
\lim\limits_{n \to \infty}{\Prob{f_{L}^+(R_n) = i}} 
&=&
\lim\limits_{n \to \infty}{\Prob{f_{XL}^+(R_n) = i}}.
\end{eqnarray}
\end{proof}

Now we are in position to prove Theorem~\ref{thm:2}, starting with Part (a). To avoid redundancy, we will write $f^+$ in place of $f^+(R_n)$ throughout this proof, combined with various pre-defined subscripts. Now to analyze
$\Prob{f_{all}^+ = m}$, first partition $f_{all}^+$ into $f_{XS}^+$, $f_{M}^+$, and $f_{XL}^+$, followed by conditioning on whether $f_{M}^+ = 0$, as shown:
\begin{eqnarray}
\Prob{f_{all}^+ = m} 
				&=& \sum\limits_{a,b,c \geq 0, a+b+c = m}
						{\Prob{f_{XS}^+ = a
						 \wedge
						 f_{M}^+ = b
						 \wedge
						 f_{XL}^+ = c}
						 } \\
				&=& \sum\limits_{a,c \geq 0, a+c = m}
						{\Prob{f_{XS}^+ = a
						 \wedge
						 f_{M}^+ = 0
						 \wedge
						 f_{XL}^+ = c}
						 } +
						 \sum\limits_{a,c \geq 0, b \geq 1, a+b+c = m}
						{\Prob{f_{XS}^+ = a
						 \wedge
						 f_{M}^+ = b
						 \wedge
						 f_{XL}^+ = c}
						 } \\
				&=& \sum\limits_{i=0}^{m}
						{\Prob{f_{XS}^+ = i
						 \wedge
						 f_{M}^+ = 0
						 \wedge
						 f_{XL}^+ = m-i}
						 } +
						 \sum\limits_{a,c \geq 0, b \geq 1, a+b+c = m}
						{\Prob{f_{XS}^+ = a
						 \wedge
						 f_{M}^+ = b
						 \wedge
						 f_{XL}^+ = c}
						 }.
\end{eqnarray}
Taking the limit as $n$ approaches infinity then applying Lemma~\ref{lem:3}, we have
\begin{eqnarray}
\lim\limits_{n \to \infty}{\Prob{f_{all}^+ = m}} &=&
\lim\limits_{n \to \infty}{
		\sum\limits_{i=0}^{m} {
			\Prob{f_{XS}^+ = i \wedge f_M^+ = 0 \wedge f_{XL}^+ = m-i}
		}
	}.
\end{eqnarray}

The RHS of the equation can be rewritten as
\begin{eqnarray}
\lim\limits_{n \to \infty} {
	\sum\limits_{i=0}^m {
		\Prob{f_{XS}^+ = i \wedge f_{XL}^+ = m-i}
	}
} -
\lim\limits_{n \to \infty} {
	\sum\limits_{i=0}^m {
		\Prob{f_{XS}^+ = i
		\wedge
		f_{M}^+ \geq 1
		\wedge
		f_{XL}^+ = m-i}
	}
}.
\label{eqn:3}
\end{eqnarray}
Taking the absolute value of Equation~\ref{eqn:3}, we get
\begin{eqnarray}
&&
\abs{
	\lim\limits_{n \to \infty} {
		\sum\limits_{i=0}^m {
			\Prob{f_{XS}^+ = i \wedge f_{XL}^+ = m-i}
		}
	} -
	\lim\limits_{n \to \infty} {
		\sum\limits_{i=0}^m {
			\Prob{f_{XS}^+ = i \wedge f_M^+ = 0 \wedge f_{XL}^+ = m-i}
		}
	}
} \\
&\leq&
\lim\limits_{n \to \infty} {
		\sum\limits_{i=0}^m {
			\Prob{f_{XS}^+ = i \wedge f_M^+ \geq 1 \wedge f_{XL}^+ = m-i}
		}
	}
	\\
&\leq&
\lim\limits_{n \to \infty} {
			\Prob{f_M^+ \geq 1}
	} =  0,\mbox{ by Lemma~\ref{lem:3}.}
\end{eqnarray}
So the expression in Equation~\ref{eqn:3} equals zero. In other words,
\begin{eqnarray}
\lim\limits_{n \to \infty} {
	\sum\limits_{i=0}^m {
		\Prob{f_{XS}^+ = i \wedge f_M^+ = 0 \wedge f_{XL}^+ = m-i}
	}
}
&=&
\lim\limits_{n \to \infty} {
	\sum\limits_{i=0}^m {
		\Prob{f_{XS}^+ = i \wedge f_{XL}^+ = m-i}
	}
},
\end{eqnarray}
which implies
$
\lim\limits_{n \to \infty} {
	\Prob{f_{all}^+ = m}
}
=
\lim\limits_{n \to \infty} {
	\sum\limits_{i=0}^m{
	\Prob{f_{XS}^+ = i \wedge f_{XL}^+ = m-i}
}}.
$
Then by Lemma~\ref{lem:5}~and~\ref{lem:6},
\begin{eqnarray}
\lim\limits_{n \to \infty} {
	\Prob{f_{all}^+ = m}
}
&=&
\lim\limits_{n \to \infty} {
	\sum\limits_{i=0}^m {
		\Prob{f_{XS}^+ = i}\Prob{f_{XL}^+ = m-i}
	}
} \\
&=&
\lim\limits_{n \to \infty} {
	\sum\limits_{i=0}^m {
		\Prob{f_{S}^+ = i}\Prob{f_{L}^+ = m-i}
	}
}, \mbox{ by symmetry of $f_S$ and $f_L$}, \\
&=&
\lim\limits_{n \to \infty} {
	\sum\limits_{i=0}^m {
		\Prob{f_{S}^+ = i}\Prob{f_{S}^+ = m-i}
	}
}, \mbox{ as desired}.
\end{eqnarray}

Now for Part (b), begin by conditioning $\Prob{f_S^+(R_n) = m}$ on whether the sumset $R_n + R_n$ is empty, which yields
\begin{eqnarray}
\Prob{R_n + R_n = \emptyset}
\Prob{f_S^+(R_n) = m | R_n + R_n = \emptyset} +
\Prob{R_n + R_n \neq \emptyset}
\Prob{f_S^+(R_n) = m | R_n + R_n \neq \emptyset}.
\label{eqn:4}
\end{eqnarray}

The first term in Equation~\ref{eqn:4} equals
$
\Prob{R_n \neq \emptyset}
\Prob{f_S^+(R_n) = m | f_S^+(R_n) = n}
= \left(\frac{1}{2^n}\right) \cdot 1_{[m=n]}.
$
Hence
\begin{eqnarray}
\lim\limits_{n \to \infty} {
	\Prob{R_n + R_n = \emptyset}
	\Prob{f_S^+(R_n) = m | R_n + R_n = \emptyset}
}
&=& 0.
\end{eqnarray}

Then condition the second term in Equation~\ref{eqn:4} on the value of the minimum sum, $\min(R_n+R_n)$:
\begin{eqnarray}
&&\Prob{R_n + R_n \neq \emptyset}
\Prob{f_{[0,n-1]}^+(R_n) = m | R_n + R_n \neq \emptyset} \\
&=&
\sum\limits_{i=0}^{2n-2} {
	\Prob{\min(R_n+R_n) = i}
	\Prob{f_{[0,n-1]}^+(R_n) = m | \min(R_n+R_n) = i}
} \\
&=&
\sum\limits_{i=0}^{m} {
	\Prob{\min(R_n+R_n) = i}
	\Prob{f_{[0,n-1]}^+(R_n) = m | \min(R_n+R_n) = i}
} \\
&=&
\sum\limits_{i=0}^{\floor{m/2}} {
	\Prob{\min(R_n) = i}
	\Prob{f_{[0,n-1]}^+(R_n) = m | \min(R_n) = i}
} \\
&=&
\sum\limits_{i=0}^{\floor{m/2}} {
	\frac{1}{2^{i+1}}
	\Prob{f_{[0,n-1]}^+(R_n) = m | \min(R_n) = i}
} \\
&=&
\sum\limits_{i=0}^{\floor{m/2}} {
	\frac{1}{2^{i+1}}
	\Prob{f_{[0,n-1]}^+(R_{n-i}' + i) = m}
} \\
&=&
\sum\limits_{i=0}^{\floor{m/2}} {
	\frac{1}{2^{i+1}}
	\Prob{f_{[2i,n-1]}^+(R_{n-i}' + i) = m - 2i}
} \\
&=&
\sum\limits_{i=0}^{\floor{m/2}} {
	\frac{1}{2^{i+1}}
	\Prob{f_{[0,n-2i-1]}^+(R_{n-i}') = m - 2i}
}. \\
\end{eqnarray}
Express this result as
\begin{eqnarray}
&& \sum\limits_{i=0}^{\floor{m/2}} {
\frac{1}{2^{i+1}}
\Prob{f_{[0,n-i-1]}^+(R_{n-i}') - f_{[n-2i-1,n-i-1]}^+(R_{n-i}') = m-2i}
} \\
&=& \sum\limits_{i=0}^{\floor{m/2}}{
\frac{1}{2^{i+1}}
\Prob{f_{[0,n-i-1]}^+(R_{n-i}') = m-2i
			\wedge
			f_{[n-2i-1,n-i-1]}^+(R_{n-i}') = 0
			}
} - 
\\
&& - \sum\limits_{i=0}^{\floor{m/2}}{
\frac{1}{2^{i+1}}
\Prob{f_{[0,n-i-1]}^+(R_{n-i}') = m-2i
			\wedge
			f_{[n-2i-1,n-i-1]}^+(R_{n-i}') \geq 1
			}
}.
\label{eqn:5}
\end{eqnarray}

In Equation~\ref{eqn:5}, the limit of the second term
\begin{eqnarray}
&&
\lim\limits_{n \to \infty} {
\sum\limits_{i=0}^{\floor{m/2}} {
\frac{1}{2^{i+1}}
\Prob{f_{[0,n-i-1]}^+(R_{n-i}') = m-2i
\wedge
f_{[n-2i-1,n-i-1]}^+(R_{n-i}' \geq 1)
}}} \\
&\leq&
\lim\limits_{n \to \infty} {
\sum\limits_{i=0}^{\floor{m/2}} {
\frac{1}{2^{i+1}}
\Prob{f_{[n-2i-1,n-i-1]}^+(R_{n-i}') \geq 1}
}} \\
&\leq&
\lim\limits_{n \to \infty} {
\sum\limits_{i=0}^{\floor{m/2}} {
\frac{1}{2^{i+1}}
\Prob{f_{[n-2i-1,n-i-1]}^+(R_{n-i}) \geq 1}
}}, \mbox{ and for $n \gg m$},\\
&\leq&
\lim\limits_{n \to \infty} {
\sum\limits_{i=0}^{\floor{m/2}} {
\frac{1}{2^{i+1}}
\Prob{f_M^+(R_{n-i}) \geq 1}
}}=0, \mbox{ by Lemma~\ref{lem:3}}.
\end{eqnarray}

Hence,
$
\lim\limits_{n \to \infty} {
\sum\limits_{i=0}^{\floor{m/2}} {
\frac{1}{2^{i+1}}
\Prob{
f_{[0,n-i-1]}^+(R_{n-i}') = m-2i
\wedge
f_{[n-2i-1,n-i-1]}^+(R_{n-i}') \geq 1
}}} 
= 0.
$
We have just shown that
$\Prob{f_S^+(R_n) = m}$
expands into three terms, two of which converge to $0$ as $n \to \infty$, leaving us
\begin{eqnarray}
\lim\limits_{n \to \infty} {
	\Prob{f_S^+(R_n) = m}
}
&=&
\lim\limits_{n \to \infty} {
	\sum\limits_{i=0}^{\floor{m/2}} {
		\frac{1}{2^{i+1}}
		\Prob{
			f_{[0,n-i-1]}^+(R_{n-i}') = m - 2i
			\wedge
			f_{[n-2i-1,n-i-1]}^+(R_{n-i}') = 0
		}}} \\
&=&
\lim\limits_{n \to \infty} {
	\sum\limits_{i=0}^{\floor{m/2}} {
		\frac{1}{2^{i+1}}
		\Prob{
			f_{S}^+(R_{n-i}') = m - 2i
			\wedge
			f_{[n-2i-1,n-i-1]}^+(R_{n-i}') = 0
		}}}.
\end{eqnarray}

By the above equation, the difference
\begin{eqnarray}
&&\abs{
\lim\limits_{n \to \infty} {
\sum\limits_{i=0}^{\floor{m/2}} {
\frac{1}{2^{i+1}}
\Prob{f_S^+(R_{n-i}') = m-2i}
}}
-
\lim\limits_{n \to \infty} {
\Prob{f_S^+(R_n) = m}
}} \\
&\leq&
\lim\limits_{n \to \infty} {
\sum\limits_{i=0}^{\floor{m/2}} {
\frac{1}{2^{i+1}}
\Prob{f_S^+(R_{n-i}') = m-2i
		  \wedge
		  f_{[n-2i-1,n-i-1]}^+(R_{n-i}') \geq 1}
}} \\
&\leq&
\lim\limits_{n \to \infty} {
\sum\limits_{i=0}^{\floor{m/2}} {
\frac{1}{2^{i+1}}
\Prob{f_{[n-2i-1,n-i-1]}^+(R_{n-i}') \geq 1}
}} \\
&\leq&
\lim\limits_{n \to \infty} {
\sum\limits_{i=0}^{\floor{m/2}} {
\frac{1}{2^{i+1}}
\Prob{f_{[n-2i-1,n-i-1]}^+(R_{n-i}) \geq 1}
}}, \mbox{ and for $n \gg m$}, \\
&\leq&
\lim\limits_{n \to \infty} {
\sum\limits_{i=0}^{\floor{m/2}} {
\frac{1}{2^{i+1}}
\Prob{f_{M}^+(R_{n-i}) \geq 1}
}}, \mbox{ by Lemma~\ref{lem:3}}. \\
\end{eqnarray}

Hence,
\begin{eqnarray}
\lim\limits_{n \to \infty} {
\Prob{f_S^+(R_n) = m}
}
&=&
\lim\limits_{n \to \infty} {
\sum\limits_{i=0}^{\floor{m/2}} {
\frac{1}{2^{i+1}}
\Prob{f_S^+(R_{n-i}') = m-2i}
}} \\
&=&
\lim\limits_{n \to \infty} {
\sum\limits_{i=0}^{\floor{m/2}} {
\frac{1}{2^{i+1}}
\Prob{f_S^+(R_{n}') = m-2i}
}}, \mbox{ as required} .
\end{eqnarray}

In fact, with this method, we can extend Theorem~\ref{thm:2} as follows (proof omitted):

\vspace{3mm}
\begin{thm}
Let $m$ be a nonnegative integer. Then we have
\begin{enumerate} [(a)]
	\item $\lim\limits_{n \to \infty} {\Prob{f_1 = m}} = 
				 \sum\limits_{i=0}^{m}{
				 \lim\limits_{n \to \infty}{\Prob{f_2 = i}}
				 \lim\limits_{n \to \infty}{\Prob{f_2 = m-i}}
				 },
				 $\\
				 where $(f_1,f_2) \in
				 	\left\{
				 	  \left(
				 	    f_{[0,2n-2]}^+(R_n), f_{[0,n-1]}^+(R_n)
				 	  \right),
				 	  \left(
				 	    f_{[0,2n-2]}^+(R_n''), f_{[0,n-1]}^+(R_n')
				 	  \right)
				 	\right\}$
	\item $\lim\limits_{n \to \infty} {\Prob{f_3 = m}} = 
				 \sum\limits_{i=0}^{\floor{m/2}}{
				 \frac{1}{2^{i+1}}
				 \lim\limits_{n \to \infty}{\Prob{f_4 = m-2i}}
				 },
				 $\\
				 where $(f_3,f_4) \in
				 	\left\{
				 	  \left(f_{[0,n-1]}^+(R_n), f_{[0,n-1]}^+(R_n')\right),
				 	  \left(f_{[0,2n-2]}^+(R_n), f_{[0,2n-2]}^+(R_n')\right),
				 	  \left(f_{[0,2n-2]}^+(R_n'), f_{[0,2n-2]}^+(R_n'')\right),
				 	  \right.
				 $
				 \\ 
				 $
				 		\left.
				 		\hspace{24mm}
				 	  \left(f_{[0,2n-2]}^-(R_n), f_{[0,2n-2]}^-(R_n')\right),
				 	  \left(f_{[0,2n-2]}^-(R_n'), f_{[0,2n-2]}^-(R_n'')\right)
				 	\right\}$.
\end{enumerate}
\label{thm:2e}
\end{thm}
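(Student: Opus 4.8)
The plan is to re-use the proof of Theorem~\ref{thm:2} almost verbatim; every pair not already covered there fits the same template (split the ambient interval, show the middle is asymptotically empty, condition on an extreme sum or difference). The only genuinely new device is a reflection argument, needed for the pair $\left(f_{[0,2n-2]}^+(R_n''),f_{[0,n-1]}^+(R_n')\right)$ in part~(a).

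\textbf{Part (a), the new pair.} I would partition $[0,2n-2]$ into $f_{XS}^+$, $f_{M}^+$, $f_{XL}^+$ and rerun the argument following Lemma~\ref{lem:3} with $R_n''$ replacing $R_n$. This needs the $R_n''$-analogues of Lemmas~\ref{lem:3}--\ref{lem:6}: each survives because conditioning on $\{0,n-1\}\subset R_n''$ changes any probability by at most a factor of $4$, so $\Prob{k\notin R_n''+R_n''}\le 4\left(\tfrac34\right)^\floor{k/2}$ still decays fast enough for Lemma~\ref{lem:3}; the independence in Lemma~\ref{lem:5} is untouched, since the forced $0$ sits in the small block of coordinates, the forced $n-1$ in the large block, and these blocks are disjoint; and the ``$XS\to S$, $XL\to L$'' passage of Lemma~\ref{lem:6} again costs only $o(1)$. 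That yields $\lim_n\Prob{f_{[0,2n-2]}^+(R_n'')=m}=\lim_n\sum_{i=0}^m\Prob{f_{S}^+(R_n'')=i}\Prob{f_{L}^+(R_n'')=m-i}$. It then remains to show $\lim_n\Prob{f_{S}^+(R_n'')=i}=\lim_n\Prob{f_{[0,n-1]}^+(R_n')=i}$ and $\lim_n\Prob{f_{L}^+(R_n'')=i}=\lim_n\Prob{f_{[0,n-1]}^+(R_n')=i}$. The first is a Lemma~\ref{lem:6}-style truncation: for $k<\floor{n/8}$ the pairs summing to $k$ never use coordinate $n-1$, so $f_{S}^+(R_n'')$ and $f_{[0,n-1]}^+(R_n')$ have the same $XS$-truncation, which governs both limits. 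For the second I would apply $\sigma(x)=n-1-x$: it fixes $\{0,n-1\}$, so $\sigma(R_n'')$ has the same law as $R_n''$, it carries $[n,2n-2]$ onto $[0,n-2]$ in the sumset, and $n-1=0+(n-1)\in R_n''+R_n''$ always, so $f_{L}^+(R_n'')$ has the same distribution as $f_{[0,n-1]}^+(R_n'')$, whence the previous sentence finishes it.

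\textbf{Part (b), the new pairs.} These are \emph{shorter} than Theorem~\ref{thm:2}(b): each new target interval is the full range of possible sums (or differences), so no boundary error terms arise. For $\left(f_{[0,2n-2]}^+(R_n),f_{[0,2n-2]}^+(R_n')\right)$, condition on $\min(R_n)=i$, which has probability $2^{-(i+1)}$ (the residual event $R_n=\emptyset$ contributes $o(1)$); then $R_n=i+R_{n-i}'$, so $R_n+R_n=2i+(R_{n-i}'+R_{n-i}')\subseteq[2i,2n-2]$ and hence $f_{[0,2n-2]}^+(R_n)=2i+f_{[0,2(n-i)-2]}^+(R_{n-i}')$ exactly; summing over $i\le\floor{m/2}$ and exchanging the finite sum with the limit $n\to\infty$ gives the identity. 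For $\left(f_{[0,2n-2]}^+(R_n'),f_{[0,2n-2]}^+(R_n'')\right)$, condition instead on $\max(R_n')=n-1-j$, which has probability $2^{-(j+1)}$: then $R_n'\cap\{0,\dots,n-1-j\}$ is distributed (after relabelling) as $R_{n-j}''$ and $f_{[0,2n-2]}^+(R_n')=2j+f_{[0,2(n-j)-2]}^+(R_{n-j}'')$. The two difference pairs are handled identically, with the symmetric range $[-(n-1),n-1]$ of possible differences playing the role of $[0,2n-2]$; translation invariance of $R-R$ makes the bookkeeping even cleaner, since conditioning on $\min(R_n)=i$ gives $R_n-R_n=R_{n-i}'-R_{n-i}'$, forcing the top $i$ and bottom $i$ difference slots to be missing, so $f_{[-(n-1),n-1]}^-(R_n)=2i+f_{[-(n-i-1),n-i-1]}^-(R_{n-i}')$, and similarly $\max(R_n')=n-1-j$ reduces $R_n'$ to $R_{n-j}''$ for the last pair.

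\textbf{Main obstacle.} The one place that requires genuine care is the part~(a) new pair: establishing the $R_n''$-versions of Lemmas~\ref{lem:3}--\ref{lem:6} and the reflection identification of the large-sums limit. Everything in part~(b), and the remainder of part~(a), is either a literal re-run of Theorem~\ref{thm:2} or a short exact computation. A secondary check is merely to confirm that the extra factor of $4$ does not spoil the decay against the $\floor{n/8}$ window, i.e.\ that $\left(2n-2\floor{n/8}-1\right)\cdot 4\left(\tfrac34\right)^{\floor{\floor{n/8}/2}}\to 0$, which holds since exponential decay still dominates linear growth.
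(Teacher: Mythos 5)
The paper itself omits the proof of this theorem, asserting only that it follows ``with this method'' from Theorem~\ref{thm:2}, and your proposal is a correct fleshing-out of exactly that method: for part (a) the truncation into $f_{XS}^+,f_M^+,f_{XL}^+$, the factor-of-$4$ conditioning bound making the middle vanish, the disjoint-coordinate independence, and the reflection $x\mapsto n-1-x$ (which is precisely the ``symmetry of $f_S$ and $f_L$'' the paper invokes) all go through for $R_n''$, while your exact min/max conditioning identities such as $f_{[0,2n-2]}^+(R_n)=2i+f_{[0,2(n-i)-2]}^+(R_{n-i}')$ handle part (b) with no boundary error terms. The only caveat is that you tacitly read the difference-set entries $f_{[0,2n-2]}^-$ as $f_{[-(n-1),n-1]}^-$ (the full range of attainable differences), which is surely the intended meaning, since taken literally those entries are vacuous: $f_{[0,2n-2]}^-\geq n-1>m$ for large $n$, so both sides of the stated identity are zero.
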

\vspace{3mm}

\section{Missing fewer sums and differences is more probable?}

Now that we know how the distributions of number of missing sums and differences are related, let us turn our attention to the properties of each one. The most fundamental distributions for sums and differences are $\Prob{f_{[0,n-1]}^+(R_n') = m}$ and $\Prob{f_{[0,n-1]}^-(R_n'') = m}$, shown in Figure~\ref{fig:2}.

\begin{figure}[thb!]
  \centering
  \subfloat[]{\label{fig:2a}
  	\includegraphics[width=0.45\textwidth]
  	{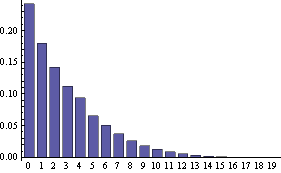}}\,\,
  \subfloat[]{\label{fig:2b}
  	\includegraphics[width=0.45\textwidth]
  	{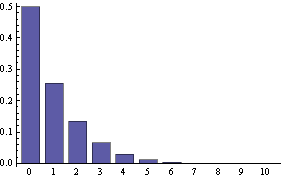}}
 	\caption{The observed distributions of (a) the number of missing sums for $R_n'$ in $[0,n-1]$ and (b) the number of missing differences for $R_n''$ in $[0,n-1]$.}
  	\label{fig:2}
\end{figure}
While both appear to be decreasing at a roughly exponential rate, the tiny distribution actually has a tiny \emph{blip} in the tail. We will state this as a theorem:

\vspace{3mm}
\begin{thm}
For $m \geq 0$, $\Prob{f_{[0,n-1]}^-(R_n'') = m}$ is not decreasing in $m$ except for $n=1,2,3,5,9$.
\end{thm}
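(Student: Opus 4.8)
The idea is to analyze the far tail of the distribution, where only very small realizations of $R_n''$ contribute, and to exhibit for every $n\notin\{1,2,3,5,9\}$ an explicit index $m$ at which the sequence rises. Write $f^-:=f_{[0,n-1]}^-(R_n'')$. Since $0,n-1\in R_n''$ always, both $0$ and $n-1$ lie in $R_n''-R_n''$, so the only differences that can be missed in $[0,n-1]$ lie in $\{1,\dots,n-2\}$ and $f^-$ ranges over $\{0,\dots,n-2\}$. The key bookkeeping fact is that if $R_n''=\{0,s_1,\dots,s_{r-2},n-1\}$ with $0<s_1<\dots<s_{r-2}<n-1$, then $s_1,\dots,s_{r-2}$ are $r-2$ distinct elements of $(R_n''-R_n'')\cap\{1,\dots,n-2\}$; hence $f^-=n-2-k$ forces $\abs{R_n''}\le k+2$, so each tail probability $\Prob{f^-=n-2-k}$ is a finite sum over sets of size at most $k+2$. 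In particular $f^-=n-2$ occurs exactly for $R_n''=\{0,n-1\}$, giving $\Prob{f^-=n-2}=2^{-(n-2)}$.

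For even $n$ the argument is short. Then $n-1$ is odd, so for every $c$ with $0<c<n-1$ the differences $c$ and $(n-1)-c$ are distinct and both lie in $\{1,\dots,n-2\}$; thus any $R_n''$ with $\abs{R_n''}\ge 3$ misses at most $n-4$ differences, while $R_n''=\{0,n-1\}$ misses $n-2$. Hence $\Prob{f^-=n-3}=0<2^{-(n-2)}=\Prob{f^-=n-2}$ for all even $n\ge 4$, which is the asserted failure of the decreasing property; $n=2$ is trivial, as then $f^-\equiv 0$.

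For odd $n$ this blip disappears, since the unique realization producing exactly one new difference is $\{0,\tfrac{n-1}{2},n-1\}$, so $\Prob{f^-=n-3}=\Prob{f^-=n-2}=2^{-(n-2)}$, a tie rather than a rise; one must therefore go one level deeper. The realizations producing exactly two new differences are the $n-3$ sets $\{0,c,n-1\}$ with $c\ne\tfrac{n-1}{2}$, together with the single four-element set $\{0,\tfrac{n-1}{3},\tfrac{2(n-1)}{3},n-1\}$ that exists only when $3\mid n-1$; thus $\Prob{f^-=n-4}=\left(n-3+[3\mid n-1]\right)2^{-(n-2)}$. The realizations producing exactly three new differences have at most five elements, and a finite case check shows they are the symmetric four-element sets $\{0,a,n-1-a,n-1\}$ (there are $\tfrac{n-3}{2}$ of these, all but at most one genuinely contributing three distinct new differences), at most two further four-element sets (those forced by $b=2a,\ b=\tfrac{n-1}{2}$ or by $a=\tfrac{n-1}{2},\ 2b-a=n-1$, available only when $4\mid n-1$), and the single five-element arithmetic progression $\{0,d,2d,3d,4d\}$ with $4d=n-1$ (again only when $4\mid n-1$); hence $\Prob{f^-=n-5}\le\left(\tfrac{n-3}{2}+3\right)2^{-(n-2)}$. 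Since $n-3>\tfrac{n-3}{2}+3$ whenever $n>9$, we get $\Prob{f^-=n-5}<\Prob{f^-=n-4}$ for every odd $n\ge 11$, and tracking the $[3\mid n-1]$ term more carefully covers $n=7$ as well (there $\Prob{f^-=n-5}=2^{-(n-2)}$ while $\Prob{f^-=n-4}=5\cdot 2^{-(n-2)}$). The remaining odd cases are small: $n=1,3$ are immediate, $n=5$ gives the weakly decreasing sequence $(\tfrac12,\tfrac14,\tfrac18,\tfrac18)$, and for $n=9$ a direct pass over the $2^7$ sets — consistent with the counts above, which give $\Prob{f^-=4}=\Prob{f^-=5}=6\cdot2^{-7}$ — shows the full distribution never increases.

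The one genuinely laborious ingredient is the enumeration in the odd case: one must verify completely that a realization of $R_n''$ with exactly two or exactly three new differences belongs to the short lists above. Concretely this means proving that the only ways the five ``generic'' differences $a,\ b,\ b-a,\ (n-1)-a,\ (n-1)-b$ of a four-element set $\{0,a,b,n-1\}$ can collapse onto two or three distinct values are the symmetric relation $a+b=n-1$ and the relations $b=2a$, $b=\tfrac{n-1}{2}$, $2b-a=n-1$ together with their pairwise intersections, and that the only five-element set achieving three new differences is $\{0,d,2d,3d,4d\}$ with $4d=n-1$. The delicate point is $n=9$: the inequality $\Prob{f^-=n-5}<\Prob{f^-=n-4}$ that drives the whole odd case degenerates there into the equality $6\cdot2^{-7}=6\cdot2^{-7}$, which is exactly why $9$ must appear on the exception list and be checked by hand rather than absorbed into the general estimate.
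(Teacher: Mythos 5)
Your argument is correct, and its core coincides with the paper's: the paper also proves the theorem by evaluating the extreme tail, computing $p_n(n-2),p_n(n-3),p_n(n-4),p_n(n-5)$ exactly (your counts $(n-3+1_{[3\mid n-1]})2^{-(n-2)}$ and $\bigl(\tfrac{n-3}{2}-1_{[3\mid n-1]}+3\cdot 1_{[4\mid n-1]}\bigr)2^{-(n-2)}$ for odd $n$ agree with its formulas), and deducing $p_n(n-4)-p_n(n-5)\geq \frac{n-10}{2^{n-1}}>0$ for $n\geq 11$, with all $n\leq 10$ settled by exact computation of the whole distribution. Where you genuinely diverge is in how the cases are organized: you split by parity and notice that for even $n$ the much cheaper blip $\Prob{f^-=n-2}=2^{-(n-2)}>0=\Prob{f^-=n-3}$ already finishes every even $n\geq 4$, so the delicate enumeration of four- and five-element configurations is needed only for odd $n$, and the paper's uniform estimate (which only kicks in at $n\geq 11$) is replaced by your bound for odd $n\geq 11$ plus direct evaluation at $n=7$. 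This buys a smaller brute-force burden (only the exceptional $n=1,2,3,5,9$, chiefly $n=9$, versus the paper's table for all $n\leq 10$) and makes visible why $9$ sits on the exception list (your inequality degenerates to $6\cdot 2^{-7}=6\cdot 2^{-7}$ there), at the cost of a parity case split and of having to carry out in full the collapse analysis you correctly identify as the laborious step — the verification that the only coincidences among $a,\ b,\ b-a,\ (n-1)-a,\ (n-1)-b$ are the relations you list, and that the only five-element configuration is the arithmetic progression with $4d=n-1$; the paper asserts the corresponding counts without proof, so your sketch is at a comparable level of rigor and the claimed counts check out.
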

\vspace{3mm}

\begin{proof}
Let $p_n(m) = \Prob{f_{[0,n-1]}^-(R_n'') = m}$. For $n \leq 10$, we can easily verify the claim by determining the exact function. Figure~\ref{fig:3} shows whether or not these functions are decreasing.

\begin{figure}[thb!]
  \centering
  \includegraphics[width=0.9\textwidth]
  {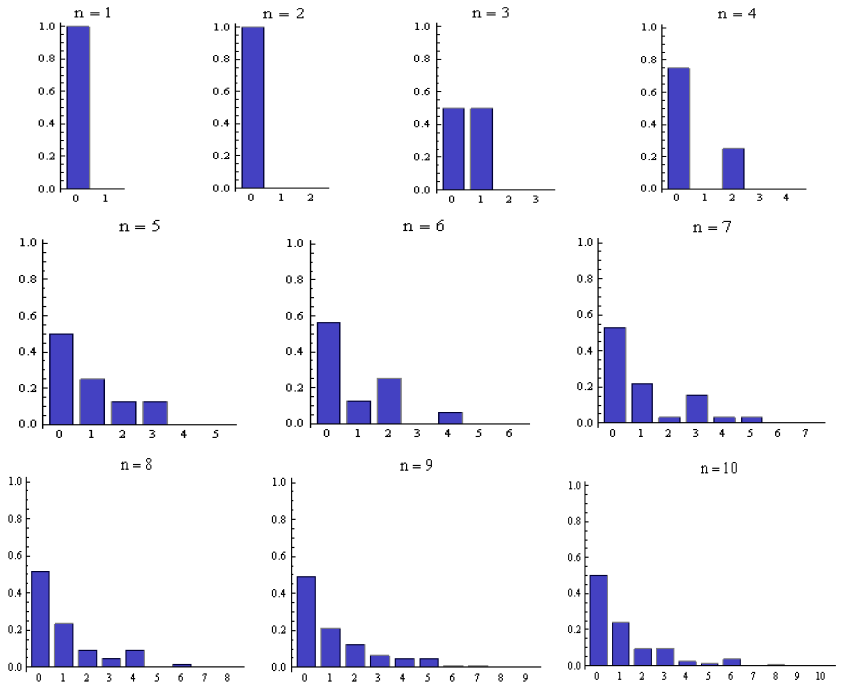}
  \caption{Exact distributions of the number of missing differences for small values of $n$}
  \label{fig:3}
\end{figure}

To prove the theorem for $n \geq 11$, we want to find some $m \in \{1,\ldots,n\}$ such that
\begin{eqnarray}
p_n(m) - p_n(m-1) > 0.
\end{eqnarray}

Let us evaluate $p_n(m)$ starting with $m=n, n-1, n-2, n-3, \ldots$.

\begin{itemize}
	\item $p_n(n) = \Prob{R_n'' - R_n'' = \emptyset} = 0$, since $\{0,n-1\} \subset R_n''$ implies $\{0,n-1\} \subset R_n'' - R_n''$
	\item $p_n(n-1) = 0$
	\item $p_n(n-2) = \Prob{R_n'' = \{0,n-1\}}
									= \frac{1}{2^{n-2}} $
	\item $p_n(n-3) = \Prob{R_n'' = \{0,\tfrac{n-1}{2}, n-1\}} 
									= \left(\frac{1}{2^{n-2}}\right) \cdot 1_{[2|n-1]}$
	\item 
	$
p_n(n-4) = \Prob{R_n'' = \{0,a,n-1\} | a \neq \tfrac{n-1}{2}} 
									+ \Prob{R_n'' = \{0,\tfrac{1}{3}(n-1),\tfrac{2}{3}(n-1),n-1\}}$ 	\\ 
								$
									\left. \hspace{14mm}
									 = \dfrac{n-2}{2^{n-2}} -
										\left(\dfrac{1}{2^{n-2}}\right) \cdot 1_{[2|n-1]} +
										\left(\dfrac{1}{2^{n-2}}\right) \cdot 1_{[3|n-1]}
									\right.
								$
	\item $
				p_n(n-5) =
					\Prob{R_n'' = \{0,d,(n-1)-d,n-1\} | d \neq \frac{1}{3}(n-1)} +
				$ \\
				$ \left. \hspace{17mm}
					+ \Prob{R_n'' = \{0,\frac{1}{4}(n-1),\frac{1}{2}(n-1),(n-1)\} |
								d_1 = d_2 = \frac{1}{2} d_3} +
				\right. $ \\
				$ \left. \hspace{17mm}
					+ \Prob{R_n'' = \{0,\frac{1}{2}(n-1),\frac{3}{4}(n-1),(n-1)\} |
								\frac{1}{2} d_1 = d_2 = d_3} +
				\right. $ \\
				$ \left. \hspace{17mm}
					+ \Prob{R_n'' = \{0,\frac{1}{4}(n-1),\frac{1}{2}(n-1),\frac{3}{4}(n-1),n-1\} } +
				\right. $ \\
				$ \left. \hspace{15mm}
					= \dfrac{\ceil{\frac{n-1}{2}}-1}{2^{n-2}}
					 - \left(\dfrac{1}{2^{n-2}}\right) \cdot 1_{[3|n-1]}
					 + \left(\dfrac{3}{2^{n-2}}\right) \cdot 1_{[4|n-1]}
				\right. . $ \\
\end{itemize}

So in the case of $m = n-4$,
\begin{eqnarray}
p_n(m) - p_n(m-1)
	&=& p_n(n-4) - p_n(n-5) \\
	&=& \frac{n-2}{2^{n-2}} - 
			\left(\tfrac{1}{2^{n-2}}\right) 1_{[2|n-1]} +
			\left(\tfrac{1}{2^{n-2}}\right) 1_{[3|n-1]} -
			\frac{\ceil{\tfrac{n-1}{2}}-1}{2^{n-2}} +
			\left(\tfrac{1}{2^{n-2}}\right) 1_{[3|n-1]} -
			\left(\tfrac{3}{2^{n-2}}\right) 1_{[4|n-1]} \\
	&\geq&
			\frac{n-2}{2^{n-2}} - \frac{1}{2^{n-2}} -
			\frac{\ceil{\tfrac{n-1}{2}}-1}{2^{n-2}} -
			\frac{3}{2^{n-2}} \\
	&\geq&
			\frac{n-5-\tfrac{n}{2}}{2^{n-2}} \\
	&=& \frac{n-10}{2^{n-1}} > 0.
\end{eqnarray}

\end{proof}

Using the same idea of analyzing the tail-end distribution, we can extend Theorem~\ref{thm:7} to include several distributions that seem to be decreasing after a certain point (usually the maximum) but in fact are not due to small blips in the tails.

\vspace{3mm}
\begin{thm}
In the following distributions, the function reaches a maximum before exhibiting a decreasing trend. For each distribution, let $m^*$ be where the maximum is reached,then we have
\begin{enumerate} [(a)]
\item For $m \geq m^*$,
			$\Prob{f_{[0,n-1]}^-(R_n'') = m}$
			is not decreasing except for $n=1,2,3,5,9$.
\item For $m \geq m^*$,
			$\Prob{f_{[0,n-1]}^-(R_n') = m}$
			is not decreasing except for $n=1,2$.
\item For $m \geq m^*$,
			$\Prob{f_{[0,n-1]}^-(R_n) = m}$
			is not decreasing except for $n=1$.
\item For $m \geq m^*$,
			$\Prob{f_{[0,2n-2]}^+(R_n'') = m}$
			is not decreasing except for $n=1,2$.
\item For $m \geq m^*$,
			$\Prob{f_{[0,2n-2]}^+(R_n') = m}$
			is not decreasing except for $n=1$.
\item For $m \geq m^*$,
			$\Prob{f_{[0,2n-2]}^+(R_n) = m}$
			is not decreasing except for $n=1$.
\end{enumerate}
For large enough values of $n$, these distributions stabilize and reach a limit, as shown in Figure~\ref{fig:4}.
\label{thm:7}
\end{thm}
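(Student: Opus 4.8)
The plan is to prove (a)--(f) by the same tail analysis used in the proof of the preceding theorem; in fact part (a) is essentially that theorem restated. Fix one of the six distributions, write $p_n(m)$ for the corresponding $\Prob{f=m}$, and let $M=M(n)$ be the largest value $m$ can take --- the number of integer points of the ambient interval minus the number of sums (resp.\ differences) that are forced to be present by the mandatory elements of the set; so $M=2n-1,2n-2,2n-4$ for $f_{[0,2n-2]}^+$ applied to $R_n,R_n',R_n''$, and $M=n,n-1,n-2$ for the three difference statistics on $[0,n-1]$ (which are rigid shifts of the same statistics on $[-(n-1),n-1]$ and so have identical shape). Since ``not decreasing for $m\ge m^*$'' requires only one pair $(m-1,m)$ with $m-1\ge m^*$ and $p_n(m)>p_n(m-1)$, the work splits into: (i) producing such a witness high in the tail for all large $n$; (ii) bounding the mode $m^*$ so the witness lands in the right range; and (iii) checking the finitely many small $n$ directly, which is where the stated exceptional sets come from.

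For the sum distributions (d), (e), (f) the witness is essentially free. By the elementary inequality $\#(S+S)\ge 2\#S-1$, a finite set of integers can have sumset of size $0,1,3,4,5,\dots$ but never size $2$; consequently $p_n$ has a forced zero just below the top of its range. Concretely, for $R_n''$ and $R_n'$ one has $p_n(M-1)=0$ (that value counts sets whose sumset has size exactly one more than the forced sumset, which is impossible) while $p_n(M)>0$, so the increase occurs at $m=M$; for $R_n$ one has $p_n(2n-3)=0$ while $p_n(2n-2)=n/2^n$ counts singletons, so the increase occurs at $m=2n-2$. In all three cases $\Prob{f_M^+\ge1}\to0$ by Lemma~\ref{lem:3} and the missing sums concentrate in the two short end-intervals, so the mode sits at an $\bigO{1}$ location; hence for $n$ past an explicit threshold the tail witness exceeds $m^*$, and the remaining small $n$ are dispatched by writing out the exact distributions.

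For the difference distributions (b), (c) --- and for the odd-$n$ instances of (a), where the ``gap'' trick degenerates because $\{0,\tfrac{n-1}{2},n-1\}\subset R_n''$ is admissible --- I would follow the finer bookkeeping of the model proof. Using $\#(S-S)\ge2\#S-1$ to cap the cardinalities involved, I would evaluate $p_n(M),p_n(M-1),p_n(M-2),\dots$ by enumerating the sets whose difference set meets $[0,n-1]$ in exactly $1,2,3,\dots$ points --- singletons, pairs, three-term progressions, near-progressions of sizes $3$ and $4$, and so on --- obtaining each $p_n(M-j)$ as a low-degree polynomial in $n$ over $2^{cn}$ plus $\bigO{1}$ correction terms carrying the divisibility indicators $1_{[2\mid n-1]},1_{[3\mid n-1]},1_{[4\mid n-1]},\dots$. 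The leading polynomial coefficient grows like a binomial count for the first few $j$ and then drops by a constant factor once covering one extra difference forces arithmetic structure; I would take the smallest $j$ where this reversal makes the leading part of $p_n(M-j-1)$ smaller than that of $p_n(M-j)$, bound each indicator correction crudely by $\bigO{2^{-cn}}$, and conclude $p_n(M-j)>p_n(M-j-1)$ for all $n$ past a threshold, checking the small $n$ by hand.

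The main obstacle is the combinatorics in the difference cases: getting the enumeration of near-extremal configurations exactly right (the ``almost-progressions'' of sizes $3$ and $4$ tangled up with the forced endpoints of $R_n'$ and $R_n''$) and then certifying that the sign of $p_n(M-j)-p_n(M-j-1)$ is not flipped by the competing divisibility corrections --- precisely the pair of inequalities the preceding proof collapses into two lines. A secondary point, needed to make (i) and (ii) fit together, is a uniform $\bigO{1}$ bound on $m^*$ for every one of the six distributions: this follows from the same concentration estimates behind Lemmas~\ref{lem:3}--\ref{lem:6}, but it must be made quantitative enough to fix the threshold that separates the general argument from the hand checks (and hence the exceptional sets $n=1$, $n=1,2$, and $n=1,2,3,5,9$). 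The closing assertion that each distribution converges as $n\to\infty$ is not used in the monotonicity proof; it follows from Theorem~\ref{thm:2e} together with the base distributions.
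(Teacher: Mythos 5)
Your proposal follows exactly the route the paper itself indicates: the paper gives no separate proof of this theorem at all, asserting only that ``the same idea of analyzing the tail-end distribution'' used for the preceding theorem (the explicit evaluation of $p_n(n), p_n(n-1),\ldots,p_n(n-5)$ for $R_n''$ and the inequality $p_n(n-4)-p_n(n-5) \geq \frac{n-10}{2^{n-1}}>0$ for $n\geq 11$, plus direct checks for small $n$) extends to all six distributions, and your plan is precisely that tail-end blip analysis. Where you go beyond the paper is the sum cases (d)--(f): the observation that a sumset can never have cardinality exactly one more than the forced sumset (by $\#(S+S)\geq 2\#S-1$), so that $p_n(M-1)=0<p_n(M)$ gives an immediate witness at the top of the range, is correct and is a cleaner and more complete argument than anything the paper records. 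For the difference cases (b)--(c) (and the odd-$n$ part of (a)) your proposal, like the paper, stops at a plan: the near-extremal enumeration with divisibility corrections analogous to the paper's $p_n(n-4)$ versus $p_n(n-5)$ computation, together with the quantitative location of the mode $m^*$ and the finite verifications that produce the stated exceptional sets, is acknowledged but not carried out --- so your sketch is at least as detailed as the paper's (omitted) proof, and identical in method.
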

\vspace{3mm}

\begin{figure}[thb!]
  \centering
  \subfloat[]{\label{fig:4a}
  	\includegraphics[width=0.32\textwidth]
  	{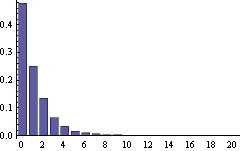}}\,\,
  \subfloat[]{\label{fig:4b}
  	\includegraphics[width=0.32\textwidth]
  	{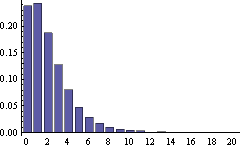}}\,\,
  \subfloat[]{\label{fig:4c}
  	\includegraphics[width=0.32\textwidth]
  	{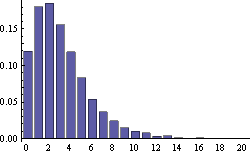}} \\
    \subfloat[]{\label{fig:4d}
  	\includegraphics[width=0.32\textwidth]
  	{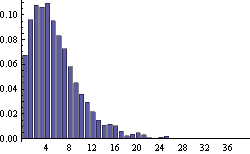}}\,\,
  \subfloat[]{\label{fig:4e}
  	\includegraphics[width=0.32\textwidth]
  	{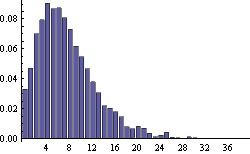}}\,\,
  \subfloat[]{\label{fig:4f}
  	\includegraphics[width=0.32\textwidth]
  	{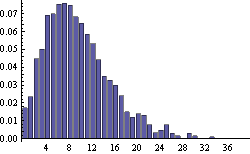}}
 	\caption{Non-decreasing distributions corresponding to Theorem~\ref{thm:7}}
  	\label{fig:4}
\end{figure}

Basically, the theorem says that, even in the parts of the graphs that exhibit a downward trend, these functions are not strictly decreasing due to some blips in the tail-end. Notice that the theorem does not assert anything for the limiting distributions because it is very possible for the limiting function to be decreasing, as in the case of $\Prob{f_{[0,n-1]}^-(R_n'')=m}$, for example. Another interesting observation is that the functions dealing with the number of missing sums in the interval $[0,n-1]$ may be decreasing. We will formally state this as a conjecture:

\vspace{3mm}
\begin{conj}
For each distribution, let $m^*$ be where the maximum is reached, then we have
\begin{enumerate} [(a)]
\item For $m \geq m^*$, $\Prob{f_{[0,n-1]}^+(R_n'')=m}$ is decreasing.
\item For $m \geq m^*$, $\Prob{f_{[0,n-1]}^+(R_n')=m}$ is decreasing except for $n=4,5,6$.
\item For $m \geq m^*$, $\Prob{f_{[0,n-1]}^+(R_n)=m}$ is decreasing except for $n=1$.
\end{enumerate}
\label{conj:8}
\end{conj}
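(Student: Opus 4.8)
We sketch the approach we would take. The plan is to route the problem through the limiting distributions, prove a log-concavity statement there, and then push it back to each finite $n$, with the exceptional cases emerging from an explicit analysis of the tail. We begin by recording two exact recursions, both already implicit in the proof of Theorem~\ref{thm:2}. Conditioning $R_n$ on its least element gives
\[
	\Prob{f_{[0,n-1]}^+(R_n) = m}
	= \tfrac{1}{2^n}\,1_{[m=n]}
		+ \sum_{i=0}^{\floor{m/2}} \tfrac{1}{2^{i+1}}\,
			\Prob{f_{[0,n-2i-1]}^+(R_{n-i}') = m-2i},
\]
and conditioning $R_n'$ on whether $n-1 \in R_n'$ gives
\[
	\Prob{f_{[0,n-1]}^+(R_n') = m}
	= \tfrac{1}{2}\,\Prob{f_{[0,n-1]}^+(R_n'') = m}
		+ \tfrac{1}{2}\,\Prob{f_{[0,n-1]}^+(R_{n-1}') = m}.
\]
Using the bound $\Prob{k \notin R_n+R_n} \le (3/4)^{\floor{k/2}}$ from the proof of Lemma~\ref{lem:3} (and its analogues for $R_n'$, $R_n''$) to control the error from replacing the interval $[0,n-2i-1]$, resp.\ $[0,n-1]$, by the natural sum-range of the set involved, we pass to the limit $n\to\infty$. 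Writing $q$, $q'$, $q''$ for the three limiting probabilities as functions of $m$, the second identity forces $q' = q''$, and the first gives $q(m) = \sum_{i \ge 0} 2^{-(i+1)} q'(m-2i)$. Thus parts~(a) and~(b) become one statement in the limit, part~(c) is its convolution with the sequence $g$ given by $g(2i)=2^{-(i+1)}$ and $g(\text{odd})=0$, and every exceptional $n$ in~(b) and~(c) must come from a finite-$n$ correction.

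The heart of the argument would be to prove that $q'$ is \emph{log-concave}, i.e.\ $q'(m)^2 \ge q'(m-1)\,q'(m+1)$ for all $m\ge 1$ (or, failing that, unimodality directly, via an injection from the configurations with $f_{[0,n-1]}^+ = m+1$ into those with $f_{[0,n-1]}^+ = m$). Log-concavity makes $q'$ unimodal, hence decreasing past its peak with no blips --- unlike the missing-difference distributions of Theorem~\ref{thm:7}, whose blips arise precisely because certain counts (such as the $p_n(n-3)$ term) are forced to vanish by divisibility constraints, a phenomenon with no analogue for sums. To get at $q'$ we would look for a self-contained recursion for it --- for instance by conditioning $\Rinf'$ on its least positive element, or on the membership pattern of $1,2,3,\dots$ --- expressing $q'(m)$ through earlier values with explicitly positive, geometrically decaying coefficients, and then verifying that this recursion transmits log-concavity. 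Granting log-concavity of $q'$, each of its fixed-parity subsequences is log-concave, convolving each with the geometric sequence preserves log-concavity (hence unimodality), and a short comparison of the two resulting peaks shows the interleaved sequence $q$ is unimodal, which settles~(c) in the limit. We expect this to be the \textbf{main obstacle}: $q'$ (the distribution of missing small sums of $R_n'$, equivalently of $R_n''$) sits at the base of the entire hierarchy, it does not split as a convolution the way the full-range sum distribution does through Theorem~\ref{thm:2}(a), and Martin and O'Bryant studied it only empirically, so pulling enough structure out of it to certify log-concavity is the real work.

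Finally we descend to each finite $n$. On a middle range $m^* \le m \le cn$ with $c>0$ small, the error terms in the two recursions above are $\bigO{(3/4)^{c'n}}$ for some $c'>0$, which is negligible against the merely geometrically small gap $q(m)-q(m+1)$; so for $n$ large, $p_n$ inherits unimodality there. For $m$ beyond this middle range a set with that many missing small sums must omit a long initial segment, hence be sparse at the bottom, so only a controlled family of configurations contributes and $p_n(m)$ becomes an explicit function of $n$ and of $n$ modulo a small period, computed exactly as in the proof of Theorem~\ref{thm:7}; checking $p_n(m)\ge p_n(m+1)$ there is then a finite calculation whose failures yield precisely the exceptional $n$. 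Here the sum/difference contrast is decisive: a missing \emph{sum} in $[0,n-1]$ only forces a local gap in $R$, so the relevant counts vary smoothly with $n$, whereas a missing \emph{difference} forces global arithmetic structure and thus the irregular vanishing responsible for the blips of Theorem~\ref{thm:7}. The finitely many remaining small $n$ are handled by computing the exact distributions outright, which confirms that there are no exceptions in~(a) and isolates the exceptions $\{4,5,6\}$ in~(b) and $\{1\}$ in~(c).
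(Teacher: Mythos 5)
You are attempting a statement that the paper itself does not prove: Conjecture~\ref{conj:8} is stated as a conjecture and supported only by the empirical plots of Figure~\ref{fig:5}, so there is no proof in the paper to compare against, and your task here is genuinely open. Judged on its own terms, your proposal has its decisive gap exactly where you flag ``the main obstacle'': the log-concavity (or even just unimodality) of the limiting distribution $q'$ of $f_{[0,n-1]}^+(R_n')$ is never established, and the ``self-contained recursion with positive, geometrically decaying coefficients'' that is supposed to transmit log-concavity is never exhibited, nor is the proposed injection between configurations constructed. Every subsequent step --- the convolution with $g$, the peak comparison for the interleaved sequence $q$, and the descent to finite $n$ --- is conditional on that unproved input, so what you have is a plausible plan rather than a proof.

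Even granting log-concavity of $q'$, the finite-$n$ conclusion is not secured as described, and the finite-$n$ statement is the whole content of Conjecture~\ref{conj:8}: it asserts monotonicity for every $n$ with exact exceptional sets (none in (a), $\{4,5,6\}$ in (b), $\{1\}$ in (c)). Your limit-plus-perturbation argument additionally needs (i) existence of the limits $q,q',q''$, which the paper never proves; (ii) \emph{strict} positivity of each gap $q(m)-q(m+1)$ for $m \geq m^*$, with an effective lower bound beating the $\bigO{(3/4)^{c'n}}$ errors --- unimodality permits equal consecutive values, and at such an $m$ the finite-$n$ correction can flip the sign, producing exactly the kind of blip you are trying to exclude; (iii) control of $m^*$ itself, which depends on $n$ and is only identified empirically; and (iv) an actual tail computation in the regime $m$ comparable to $n$, since the collapse $q'=q''$ holds only in the limit while parts (a) and (b) differ precisely at finite $n$ (no exceptions versus $n=4,5,6$), so the exceptional sets must emerge from the explicit configuration counts you only sketch by analogy with the proof of Theorem~\ref{thm:7}. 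Your heuristic contrast between sums and differences (local gaps versus global divisibility structure) is a reasonable guide, but as it stands none of the load-bearing steps is carried out.
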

\vspace{3mm}

These claims are supported by Figure~\ref{fig:5}.

\begin{figure}[thb!]
  \centering
  \subfloat[]{\label{fig:5a}
  	\includegraphics[width=0.32\textwidth]
  	{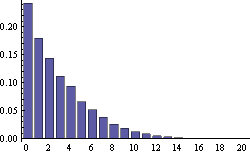}}\,\,
  \subfloat[]{\label{fig:5b}
  	\includegraphics[width=0.32\textwidth]
  	{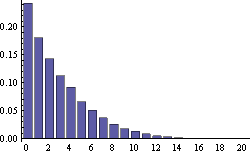}}\,\,
  \subfloat[]{\label{fig:5c}
  	\includegraphics[width=0.32\textwidth]
  	{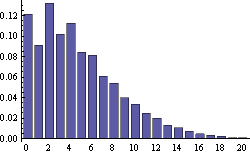}}
 	\caption{Distributions that appear to be decreasing as described in Conjecture~\ref{conj:8}}
  	\label{fig:5}
\end{figure}
\section{A conjecture about the limiting distribution for differences}

Recall that
$\Prob{f_{[0,n-1]}^-(R_{\infty}'') = m}$
from Figure~\ref{fig:4a} and 
$\Prob{f_{[0,n-1]}^+(R_{\infty}') = m}$
from Figure~\ref{fig:5b} are the two fundamental distributions, but despite our efforts, we still do not know much about either except some decreasing properties from the last section. Presently, we want to find out more about the heght of the bars in the graphs through experimentation in Mathematica.

Let us start with counting the sums. Instead of looking at all of $R_{\infty}'$, which almost surely spans $[0,\infty)$, we will focus on only the elements of $R_{\infty}'$ the interval $[0,19]$. By definition, $0 \in R_{\infty}'$. As for the other elements in $[1,19]$, we will experiment with all combinations. Since for each experiment, we fix which elements from $[0,19]$ are in $R_{\infty}'$, we also know exactly which elements from $[0,19]$ are in $R_{\infty}' + R_{\infty}'$ because elements greater than 19 do not affect sums in between 0 and 19. Counting the missing sums for these sets in the interval $[0,19]$, we have a distribution as shown in Figure~\ref{fig:6a}.

Notice the striking similarity between this graph and Figure~\ref{fig:5a}. This resemblance signifies that very few sets miss more than 20 sums, and the sums missed are almost less than 20. In fact, we can observe this pattern by focusing on an event smaller interval, say $[0,13]$. Very few missed sums exceed 13.

\begin{figure}[thb!]
  \centering
  \subfloat[]{\label{fig:6a}
  	\includegraphics[width=0.45\textwidth]
  	{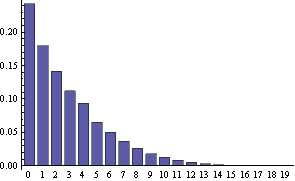}}\,\,
  \subfloat[]{\label{fig:6b}
  	\includegraphics[width=0.45\textwidth]
  	{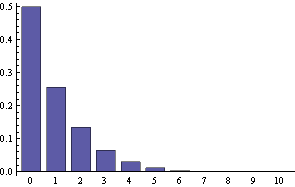}}
 	\caption{Distributions for (a) the number of missing sums in $[0,19]$, and (b) the number of missing differences in $[0,10]$}
  	\label{fig:6}
\end{figure}

Likewise, we can take a finite set, insist that both endpoints be included, and iterate through all possible combinations. Counting all relevant pairwise differences, we arrive at a distribution as shown in Figure~\ref{fig:6b}, which bears an uncanny resemblance to Figure~\ref{fig:4a}, hence demonstrating that nearly all missed differences are among the 10 largest possible differences. So if $d_{max}$ is the largest possible difference, then $\{d_{max}-9,\ldots,d_{max}\}$ would comprise virtually all the missed differences.

Let us draw our attention to the graph concerning missing differences. Recall that Figure~\ref{fig:6b} is obtained by iterating through all subsets of $[0,19]$ containing $\{0,19\}$ and counting the number of missing differences from $[0,10]$. Now, for $n=1,\ldots,9$, we can do the same by taking all subsets of $[0,2n-1]$ containing $\{0,2n-1\}$ and counting missing differences from $[0,n]$, we get the following distributions as shown in Figure~\ref{fig:7}.

\begin{figure}[thb!]
  \centering
  \subfloat[$n=1$]{\label{fig:7a}
  	\includegraphics[width=0.32\textwidth]
  	{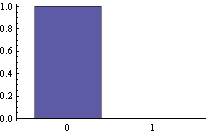}} \,\,
  \subfloat[$n=2$]{\label{fig:7b}
  	\includegraphics[width=0.32\textwidth]
  	{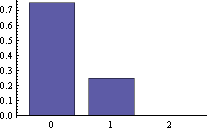}} \,\,
  \subfloat[$n=3$]{\label{fig:7c}
  	\includegraphics[width=0.32\textwidth]
  	{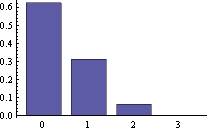}} \\
  \subfloat[$n=4$]{\label{fig:7d}
  	\includegraphics[width=0.32\textwidth]
  	{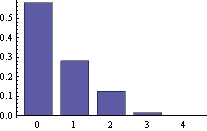}} \,\,
  \subfloat[$n=5$]{\label{fig:7e}
  	\includegraphics[width=0.32\textwidth]
  	{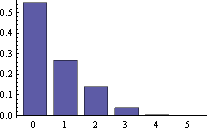}} \,\,
  \subfloat[$n=6$]{\label{fig:7f}
  	\includegraphics[width=0.32\textwidth]
  	{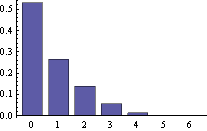}} \\
  \subfloat[$n=7$]{\label{fig:7g}
  	\includegraphics[width=0.32\textwidth]
  	{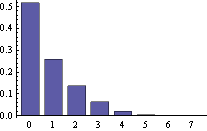}} \,\,
  \subfloat[$n=8$]{\label{fig:7h}
  	\includegraphics[width=0.32\textwidth]
  	{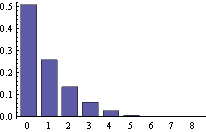}} \,\,
  \subfloat[$n=9$]{\label{fig:7i}
  	\includegraphics[width=0.32\textwidth]
  	{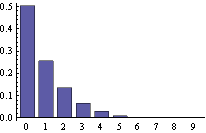}}  	
  \caption{Distribution of the number of missing differences for each value of $n$}
  	\label{fig:7}
\end{figure}

These graphs suggest that as $n$ approaches $\infty$, a limiting distribution exists and, in addition, is the same as the one shown in Figure~\ref{fig:4a}. Let us state this as a lemma and prove it before proceeding.

\vspace{3mm}
\begin{lem}
\begin{eqnarray}
\lim\limits_{n \to \infty} {
\Prob{f_{[0,n-1]}^-(R_n'')=m}
}
&=&
\lim\limits_{n \to \infty} {
\Prob{f_{[n,2n-1]}^-(R_{2n}'')=m}
}.
\end{eqnarray}
\end{lem}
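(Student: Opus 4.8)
The plan is to rewrite \emph{both} sides as statements about the number of missing \emph{sums} of a pair of random subsets of $\{0,\ldots,n-1\}$, each containing $0$, and then compare the two descriptions block by block. For the right-hand side, split $R_{2n}''$ into its low block $L := R_{2n}'' \cap \{0,\ldots,n-1\}$ and its high block $U := R_{2n}'' \cap \{n,\ldots,2n-1\}$; these are determined by disjoint coordinates, hence independent, with $0 \in L$ and $2n-1 \in U$. A difference $d$ with $n \le d \le 2n-1$ can only be written $x-y$ with $x \in U$ and $y \in L$, so reflecting the high block through its top, $H := \{(2n-1)-x : x \in U\} \subseteq \{0,\ldots,n-1\}$, and setting $e := (2n-1)-d$, we get $d \in R_{2n}''-R_{2n}''$ iff $e \in H+L$, with $e$ running over $\{0,\ldots,n-1\}$ as $d$ runs over $\{n,\ldots,2n-1\}$. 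Since $H$ and $L$ are independent and each has the law of $R_n'$, the quantity $f_{[n,2n-1]}^-(R_{2n}'')$ is distributed as $\#\{e \in \{0,\ldots,n-1\} : e \notin H+L\}$ for two independent copies of $R_n'$.

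Next I would apply the same reflection to $R_n''$ itself: a difference $d = (n-1)-j$ in $\{0,\ldots,n-1\}$ has the form $\big((n-1)-a\big)-b$ with $a,b \ge 0$ and $a+b=j$, so $d \in R_n''-R_n''$ iff $j \in A+B$, where $B := R_n''$ and $A := (n-1)-R_n''$; hence $f_{[0,n-1]}^-(R_n'')$ is distributed as $\#\{j \in \{0,\ldots,n-1\} : j \notin A+B\}$. Here $A$ and $B$ each have the law of $R_n''$, but are reflections of the \emph{same} set, so they are far from independent. The key observation is that whether $j \in A+B$ for $j \le K$ depends only on $A \cap \{0,\ldots,K\} = (n-1)-\big(R_n'' \cap \{n-1-K,\ldots,n-1\}\big)$ and $B \cap \{0,\ldots,K\} = R_n'' \cap \{0,\ldots,K\}$; once $n > 2K+1$ the two blocks of $R_n''$ involved sit on disjoint coordinates and so are genuinely independent, and after reflection each is $\{0\}$ together with a uniform subset of $\{1,\ldots,K\}$, so the pair $\big(A \cap \{0,\ldots,K\},\, B \cap \{0,\ldots,K\}\big)$ has the same joint law as $\big(H \cap \{0,\ldots,K\},\, L \cap \{0,\ldots,K\}\big)$ from the first step. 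Consequently, for every fixed $K$ and every $n > 2K+1$, the truncated counts $\#\{j \le K : j \notin A+B\}$ and $\#\{e \le K : e \notin H+L\}$ have \emph{identical} distributions.

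It remains to bound the contributions beyond $K$. For $H+L$, the events that $a \in H$ and $e-a \in L$, for $a = 1,\ldots,\floor{e/2}$, are independent and each has probability $\tfrac{1}{4}$, so $\Prob{e \notin H+L} \le \left(\tfrac{3}{4}\right)^{\floor{e/2}}$. For $A+B$ --- which by the reflection above is just the missing-difference count for $R_n''$ --- I would exhibit $\floor{(n-d)/2}$ vertex-disjoint candidate pairs for the difference $d = (n-1)-j$ (a maximum matching in the graph on $\{0,\ldots,n-1\}$ with edges $\{i,i+d\}$), yielding $\Prob{j \notin A+B} \le C\left(\tfrac{3}{4}\right)^{\floor{(j+1)/2}}$ for a universal constant $C$, in the same spirit as the geometric estimate behind Lemma~\ref{lem:3}. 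Summing, $\Prob{\exists\, e > K : e \notin H+L}$ and $\Prob{\exists\, j > K : j \notin A+B}$ are both at most some $\epsilon_K$ with $\epsilon_K \to 0$, \emph{uniformly in $n$}. Hence for $n > 2K+1$ both $\Prob{f_{[0,n-1]}^-(R_n'') = m}$ and $\Prob{f_{[n,2n-1]}^-(R_{2n}'') = m}$ lie within $\epsilon_K$ of $\Prob{\#\{j \le K : j \notin A+B\} = m}$, so they differ by at most $2\epsilon_K$; taking $n \to \infty$ and then $K \to \infty$ proves the lemma (and, along the way, that both limits exist).

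The main obstacle, I expect, is not the reflections --- which are just careful bookkeeping of indices --- but two points in the last step. First, the geometric tail bound for $A+B$ must rest on a genuinely \emph{disjoint} family of candidate pairs: a careless choice overlaps and gives only a constant bound, whereas the matching in the difference graph recovers the $\floor{(n-d)/2}$ needed to make the estimate uniform in $n$. Second, one must handle the order of the two limits, which is exactly why the uniformity of $\epsilon_K$ in $n$ is essential; the comparison is confined to the two disjoint blocks $\{0,\ldots,K\}$ and $\{n-1-K,\ldots,n-1\}$ of $R_n''$ precisely because a sum $j \le K$ can never receive a contribution from a coordinate exceeding $K$, and it is on those blocks that the required independence becomes \emph{exact} once $n > 2K+1$.
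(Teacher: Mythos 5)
Your proof is correct, but it takes a genuinely different route from the paper. The paper's argument is top-down: it first passes to the even subsequence ($\lim_n \Prob{f^-_{[0,n-1]}(R_n'')=m} = \lim_n \Prob{f^-_{[0,2n-1]}(R_{2n}'')=m}$), then invokes a convolution identity splitting the missing differences of $R_{2n}''$ into those in $[0,n-1]$ and those in $[n,2n-1]$ (an analogue, for differences, of the factorizations in Theorem~\ref{thm:2e}), and finally kills every term but $i=0$ by a union bound showing $\lim_n \Prob{f^-_{[0,n-1]}(R_{2n}'')\geq 1}=0$; it thus tacitly assumes both that the relevant limits exist and that the difference-set version of the convolution identity is available. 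Your argument is bottom-up and self-contained: the reflection $e=(2n-1)-d$ identifies $f^-_{[n,2n-1]}(R_{2n}'')$ exactly in law with the missing-sum count of two independent copies of $R_n'$, the reflection $j=(n-1)-d$ rewrites $f^-_{[0,n-1]}(R_n'')$ as a missing-sum count for the dependent pair $(A,B)$, and the key observation that for $j\leq K$ only the blocks $R_n''\cap\{0,\ldots,K\}$ and $R_n''\cap\{n-1-K,\ldots,n-1\}$ matter (exactly independent once $n>2K+1$) gives an equality of truncated distributions, which your uniform-in-$n$ geometric tail bounds (the disjoint-matching estimate $\Prob{j\notin A+B}\leq (3/4)^{\floor{(j+1)/2}}$ and $\Prob{e\notin H+L}\leq(3/4)^{\floor{e/2}}$) then upgrade to the full statement. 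What your approach buys is rigor and extra information: it does not rely on the unproved difference-set convolution identity, it proves existence of both limits rather than assuming it, and your matching bound also repairs the paper's loose step $\Prob{k\notin R_{2n}''-R_{2n}''}=(3/4)^{2n-k}$, which is not an equality and whose exponent should be roughly half that (harmless there, since only the decay matters). What the paper's route buys is brevity, by leveraging the machinery already set up for Theorems~\ref{thm:2} and~\ref{thm:2e}.
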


\begin{proof}
We have two known identities. The first
\begin{eqnarray}
\lim\limits_{n \to \infty} {
\Prob{f_{[0,n-1]}^-(R_n'')=m}
}
&=&
\lim\limits_{n \to \infty} {
\Prob{f_{[0,2n-1]}^-(R_{2n}'')=m}
}.
\end{eqnarray}
is obtained by substituting $n$ with $2n-1$, and the second is
\begin{eqnarray}
\lim\limits_{n \to \infty} {
\Prob{f_{[0,2n-1]}^-(R_{2n}'')=m}
}
&=&
\sum\limits_{i=0}^{m}{\left(
\lim\limits_{n \to \infty} {
\Prob{f_{[0,n-1]}^-(R_{2n}'')=i}
}
\lim\limits_{n \to \infty} {
\Prob{f_{[n,2n-1]}^-(R_{2n}'')=m-i}
}
\right)}.
\end{eqnarray}

So to prove the lemma, we need to show that
$
\lim\limits_{n \to \infty} {
\Prob{f_{[0,n-1]}^-(R_{2n}'')=0}
} = 1
$.
Taking the probability of its complement, we have
\begin{eqnarray}
\lim\limits_{n \to \infty} {
\Prob{f_{[0,n-1]}^-(R_{2n}'') \geq 1}
}
&\leq&
\lim\limits_{n \to \infty} {
\sum\limits_{k=0}^{n}{
\Prob{k \notin R_{2n}'' - R_{2n}''}
}} \\
&=& \lim\limits_{n \to \infty} {
\sum\limits_{k=0}^{n}{
\left(\tfrac{3}{4}\right)^{2n-k}
}}\\
&\leq& \lim\limits_{n \to \infty} {
(n+1)\left(\tfrac{3}{4}\right)^n
} = 0.
\end{eqnarray}

Then since
$
\lim\limits_{n \to \infty} {
\Prob{f_{[0,n-1]}^-(R_{2n}'') \geq 1}
} = 0
$, we conclude that
\begin{eqnarray}
\lim\limits_{n \to \infty} {
\Prob{f_{[0,n-1]}^-(R_{n}'') = m}
}
&=&
\lim\limits_{n \to \infty} {
\Prob{f_{[0,2n-1]}^-(R_{2n}'') = m}
} \\
&=&
\lim\limits_{n \to \infty} {
\Prob{f_{[n,2n-1]}^-(R_{2n}'') = m}
}.
\end{eqnarray}

\end{proof}

The significance of all this is that, while
$
\Prob{f_{[0,n-1]}^-(R_n'') = m}
$
is, in general, not a decreasing function,
$
\Prob{f_{[0,n-1]}^-(R_{2n}'') = m}
$
\emph{does} seem to be decreasing, according to Theorem~\ref{thm:7}(a). If the decreasing trend continues, then its limiting distribution would also be decreasing. Note that this does not contradict with Theorem~\ref{thm:7}(a) since a sequence of functions does not need to be decreasing for its limit to be decreasing. In the proof of the theorem, we showed that a small blip at the tail-end of the distribution prevents it from being decreasing. If the blip is the only anomaly in an otherwise decreasing function, then as $n$ approaches $\infty$, the height of the blip would decrease to $0$, resulting in a decreasing limiting distribution. Figure~\ref{fig:7} serves to strengthen this last conjecture:

\vspace{3mm}
\begin{conj}
For all integers $m \geq 0$,
$
\lim\limits_{n \to \infty} {
\Prob{f_{[0,n-1]}^-(R_n'')=m}
}
$
is a decreasing function.
\label{conj:10}
\end{conj}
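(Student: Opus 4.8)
\medskip
We outline a possible route toward this conjecture, deferring the decisive combinatorial step. The strategy is to replace the limiting law by a concrete finite object and then attack monotonicity by a gap-filling coupling. Write $g(m):=\lim_{n\to\infty}\Prob{f_{[0,n-1]}^-(R_n'')=m}$; we want $g(m)\le g(m-1)$ for all $m\ge1$. By the lemma just proved, $g(m)=\lim_{n\to\infty}\Prob{f_{[n,2n-1]}^-(R_{2n}'')=m}$, so $g$ is the limiting law of the number of the $n$ largest possible differences that $R_{2n}''-R_{2n}''$ misses. Reflecting the top half of $R_{2n}''$, the difference $2n-1-t$ is realized iff $t$ lies in the bilateral sumset of the bottom block $R_{2n}''\cap[0,n-1]$ and the reflected top block; both blocks contain $0$ (since $0,2n-1\in R_{2n}''$) and are independent, so as $n\to\infty$ they decouple into two independent copies of $R_\infty'$. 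Hence $g(m)=\Prob{N=m}$, where $N$ counts the nonnegative integers missing from $A+B$ with $A,B$ independent random subsets of $\mathbb{Z}_{\ge0}$, each containing $0$ and each further element independently with probability $\tfrac12$; one checks $N<\infty$ almost surely from the (exact) bound $\Prob{k\notin A+B}=\tfrac14(\tfrac34)^{k-1}$ for $k\ge1$. This passage is essential, since the finite-$n$ distributions are genuinely non-monotone because of the tail blip, but that blip sits near $m\approx n$ and escapes to infinity.

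\medskip
The second step is the coupling. Given a configuration $(A,B)$ with $N=m\ge1$, let $s^\star$ be the largest missing sum; then $s^\star\ge1$ and $s^\star\notin A$, so put $\Phi(A,B):=(A\cup\{s^\star\},B)$. Every integer exceeding $s^\star$ already lies in $A+B$, so adjoining $s^\star$ to $A$ creates precisely the one new sum $s^\star$; thus $\Phi$ maps $\{N=m\}$ into $\{N=m-1\}$. Truncating to $A,B\subseteq\{0,\ldots,L\}$ for large $L$ — harmless since $N$ and $s^\star$ stabilize with probability tending to $1$ — the quantities $2^{2L}g(m)$ and $2^{2L}g(m-1)$ become $|\{N=m\}|$ and $|\{N=m-1\}|$, and $\Phi$ gives $|\{N=m\}|=\sum_{(A',B')}\#\Phi^{-1}(A',B')$, so the conjecture is equivalent to the statement that a uniformly random configuration with $N=m-1$ has at most one $\Phi$-preimage on average.

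\medskip
The crux — and the step I expect to be hardest — is exactly that balancing of preimage counts, because $\Phi$ is neither injective nor surjective. A target $(A',B')$ is hit once for each $x\in A'$ that lies above every missing sum of $A'+B'$, admits only the representation $x=x+0$ in $A'+B'$, and whose deletion from $A'$ un-covers no sum besides $x$; there is no uniform bound on the number of such $x$, so some targets are multiply covered. These excesses must be offset by the targets with no preimage — for example those (such as $A'=B'=\mathbb{Z}_{\ge0}$) all of whose large elements carry redundant coverage. I would attempt this either by upgrading $\Phi$ to a genuine injection, recording the filled gap in an ``inert'' coordinate, which forces one to canonically locate a long common void of $A$ and $B$ above $s^\star$ (such a void exists almost surely) and to bound how much data identifying $x$ requires; or by a fractional transport argument that ships each $\{N=m\}$-configuration's mass, suitably split, back among its candidate ancestors and shows none is oversubscribed. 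Either way the conjecture reduces to a purely combinatorial nesting lemma for gap structures of bilateral sumsets, and it remains conceivable that the sharp inequality fails by a lower-order term, in which case one would instead prove $g(m)\le g(m-1)$ through a quantitative estimate controlled by the geometric tail bound above.
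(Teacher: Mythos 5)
The statement you are attacking is left as a conjecture in the paper: the author offers only the reduction lemma of Section 5 and numerical evidence (Figures 6--7), so there is no paper proof to compare against, and your proposal would have to stand on its own. Its first stage does stand: using that lemma, the limiting law is indeed the law of the number of integers $t\ge 0$ missing from $A+B$, where $A$ is the bottom block of $R_{2n}''$ and $B$ the reflected top block, and these decouple in the limit into two independent copies of $R_\infty'$; the tail bound $\Prob{k\notin A+B}=\tfrac{1}{4}\left(\tfrac{3}{4}\right)^{k-1}$ justifies the passage to the limit, and your map $\Phi$ (adjoin the largest missing sum $s^\star$ to $A$) really does lower the count $N$ by exactly one, since every integer above $s^\star$ is already a sum. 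This reformulation is a genuine sharpening of what the paper does empirically.

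The genuine gap is the one you flag yourself: $\Phi$ is neither injective nor surjective, so the equal-weight counting in the truncated model gives no inequality. A target $(A',B')$ with $N=m-1$ may admit several elements $x\in A'$ whose removal makes $x$ the largest missing sum (there is no uniform bound on how many), while other targets admit none; the desired inequality $g(m)\le g(m-1)$ is precisely the assertion that the multiply-covered targets are, in aggregate, compensated by the uncovered ones, and nothing in your argument establishes that balance. Upgrading $\Phi$ to an injection by canonically encoding $s^\star$ in the image configuration, or building the fractional transport you sketch, is exactly where the difficulty of the conjecture lives, so as written the proposal reduces the conjecture to an equally open combinatorial claim rather than proving it; the statement remains open both in the paper and in your write-up.
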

\vspace{3mm}

\bibliographystyle{acmsiggraph}
\bibliography{references}
\end{document}